% !TeX spellcheck = en_US
\documentclass{amsart}

\usepackage{amsmath,amsthm,amssymb,latexsym,ifthen,graphicx}
\usepackage[colorlinks,citecolor=black,urlcolor=black, linkcolor=black, backref]{hyperref}
\usepackage{tikz}

\newcommand{\scmid}{\mathrel{}\middle|\mathrel{}}

% Define Author note colors:
\usepackage{xcolor}	
	\usepackage{soul}

% End.

\usepackage[utf8]{inputenc}
\usepackage{microtype}
\usepackage{mathrsfs}
% making theorems numbered with sections
\usepackage{undertilde}
\newtheorem{theorem}{Theorem}[section]

\newtheorem{corollary}[theorem]{Corollary}

\newtheorem{definition}[theorem]{Definition}

\newtheorem{First Proof}[theorem]{First Proof}

\newtheorem{Second Proof}[theorem]{Second Proof}

\newtheorem{lemma}[theorem]{Lemma}

\newtheorem{proposition}[theorem]{Proposition}
\newtheorem{question}[theorem]{Question}
\newtheorem{remark}[theorem]{Remark}

\theoremstyle{definition}

\newcommand{\equivT}{\equiv_{{\rm T}}}
\newcommand{\leqT}{\leq_{{\rm T}}}
\newcommand{\geqT}{\geq_{{\rm T}}}
\newcommand{\leT}{<_{{\rm T}}}
\newcommand{\geT}{>_{{\rm T}}}

\newcommand{\leqtt}{\leq_{{\rm tt}}}

% symbols for degree classes

\newcommand{\uh}{\!\upharpoonright\!}

\newcommand{\RCA}{\mathrm{RCA_0}}
\newcommand{\MPT}{\mathrm{MPT}}
\newcommand{\wMPT}{\mathrm{wMPT}}
\newcommand{\WKL}{\mathrm{WKL_0}}
\newcommand{\ACA}{\mathrm{ACA_0}}
\newcommand{\ATR}{\mathrm{ATR_0}}

\newcommand{\HK}{\mathcal{HK}}
\newcommand{\DET}{\mathrm{DET}}

\newcommand{\PST}{\mathrm{PST}}
\newcommand{\RPST}{\mathrm{RPST}}
\newcommand{\CPST}{\mathrm{CPST}}
\begin{document}

\title{On Martin's Pointed Tree Theorem}
\subjclass[2000]{03D28}

\author{Rupert H\" olzl}
\address{Department of Mathematics, Faculty of Science, National University of Singapore, Block S17, 10~Lower Kent Ridge Road, Singapore 119076, Singapore}
\email{r@hoelzl.fr}

\author{Frank Stephan}\
\address{Department of Mathematics, Faculty of Science, National University of Singapore, Block S17, 10~Lower Kent Ridge Road, Singapore 119076, Singapore}
\email{fstephan@comp.nus.edu.sg}

\author{Liang Yu}
\address{Department of Mathematics, Nanjing University, 22 Hankou Road, Nanjing 210093, P.R. China}
\email{yuliang.nju@gmail.com}

\thanks{Rupert H\"olzl was supported by and Frank Stephan was partially supported by the Ministry of Education of Singapore through grant R146-000-184-112 (MOE2013-T2-1-062). Liang Yu was partially supported by the National Natural Science Fund of China through grant~11322112.}

\begin{abstract}
We investigate the reverse mathematics strength of Martin's pointed tree theorem ($\MPT$) and one of its variants, weak Martin's pointed tree theorem ($\wMPT$).
\end{abstract}

\maketitle

\section{Introduction}
A set $A$ of reals is cofinal if for every real $x$ there is some $y\in A$ so that $y\geqT x$. A {\em pointed} tree $T$ is a perfect tree so that for every infinite path $x\in [T]$, $T\leqT x$.

In 1968, Martin \cite{Martin68} proved the following theorem.
\begin{theorem}[Martin \cite{Martin68}]\label{theorem: MPT}
Assume that every game is determined. Then given any set $A\subseteq \omega^{<\omega}$, if $A$ is cofinal, then there is a pointed tree $[T]\subseteq A$.
\end{theorem}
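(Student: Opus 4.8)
The plan is to read the pointed tree off a winning strategy in an auxiliary game whose determinacy is supplied by the hypothesis, and to rule out the ``wrong'' player by invoking cofinality of $A$ in an essential way. Consider the game $G_A$ in which Players~I and~II alternate natural numbers, I building $u=(u_0,u_1,\dots)$ and II building $v=(v_0,v_1,\dots)$, and in which II wins iff $v\in A$ and $v\geqT u$. First I would apply the determinacy hypothesis to $G_A$; the main work is then (i)~to show Player~I has no winning strategy, so that by determinacy Player~II does, and (ii)~to extract a pointed perfect tree from a winning strategy for~II.

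For (i), suppose $\sigma$ were winning for~I. By cofinality pick $w\in A$ with $w\geqT\sigma$, and let II play $v:=w$, ignoring I's moves. Then I's resulting play $u$ is computed from $\sigma$ together with~$w$, so $u\leqT\sigma\oplus w\equivT w$; hence $v=w\geqT u$ and $v=w\in A$, i.e.\ II has won, a contradiction. So II has a winning strategy $f$, which means: for every $u\in\omega^\omega$, the play $f\ast u$ in which II follows $f$ against~$u$ satisfies $f\ast u\in A$ and $f\ast u\geqT u$.

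For (ii), identify $f$ with a real. For $\beta\in2^\omega$ let $m_\beta$ be the merge $\langle\beta_0,f_0,\beta_1,f_1,\dots\rangle$ and put $g(\beta):=f\ast m_\beta$; this is continuous in $\beta$. By the previous paragraph $g(\beta)\in A$ and $g(\beta)\geqT m_\beta$, hence $g(\beta)\geqT f$ and $g(\beta)\geqT\beta$. Set $T=\{s\in\omega^{<\omega}:\exists\beta\,(s\prec g(\beta))\}$. Then $[T]=g[2^\omega]$, which is compact (a continuous image of $2^\omega$) and has no isolated point: each fibre $g^{-1}(y)$ is contained in $\{\beta:\beta\leqT y\}$ and so is countable, while the preimage of a relatively isolated point of $[T]$ would be a nonempty open subset of $2^\omega$, hence uncountable. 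Thus $[T]$ is perfect and $[T]\subseteq A$. Finally $T$ is pointed: for a string $s$, deciding whether $s\prec g(\beta)$ for some $\beta$ reduces, by continuity, to running $f$ against the finitely many relevant prefixes of candidate $\beta$'s, so $T\leqT f$, while every $x\in[T]$ has the form $g(\beta)\geqT f$; hence $T\leqT f\leqT x$.

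The step I expect to be the crux is the very choice of the winning condition, in particular the conjunct $v\geqT u$. Dropping it makes step~(i) trivial, but then II's winning strategy could be the constant strategy ``name a fixed element of $A$'', producing a one-point, non-perfect $[T]$; the conjunct is exactly what forces II's strategy to depend on I's moves, which is why $g$ is non-constant on every basic clopen set and why $[T]$ comes out perfect. The remaining ingredients --- the Turing-reduction bookkeeping in the displayed inequalities and the Baire-category observation ruling out isolated points --- should be routine once the game is set up in this way.
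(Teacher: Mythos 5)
Your proof is correct and is, in essence, the classical argument attributed to Martin: set up an auxiliary game, use cofinality of $A$ to rule out one player having a winning strategy, then read a pointed perfect tree off the other player's winning strategy by varying the opponent's moves. The paper itself does not reprove Theorem~\ref{theorem: MPT} (it is cited to Martin), but its internal analogue, Theorem~\ref{theorem: atr implies mpt} ($\ATR \Rightarrow \MPT$), follows the same skeleton with one deliberate difference: there the auxiliary game is the Harrington--Kechris game $\HK(T)$, which packages the clause ``$v$ computes $u$'' into an on-line verification of convergence times and halting, making the payoff set $\utilde{\mathbf{\Pi}}^0_1$. That is exactly what lets the paper get by with $\utilde{\mathbf{\Pi}}^0_1$-$\DET$ (i.e.\ $\ATR$) via Theorem~\ref{theorem: atr implies 1 det}, whereas your winning condition ``$v\in A$ and $v\geqT u$'' is conceptually cleaner but sits higher in the hierarchy and so genuinely uses the blanket hypothesis that \emph{every} game is determined. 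Your extraction of the pointed perfect tree is also slightly different in mechanism: you interleave $f$ into Player~I's moves to force $g(\beta)\geqT f$ and rule out isolated points of $g[2^\omega]$ by a countability/Baire-category argument on fibres, whereas the paper extracts perfectness through Lemma~\ref{lemma: technique to construct perfect} applied to the projection of the strategy tree. Both are sound; yours is more self-contained, the paper's is tailored to the reverse-math bookkeeping it needs later.

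Two small points you should tighten if writing this out in full. First, when you claim $T\leqT f$, you should say explicitly that $g$ has a modulus of continuity recursive in $f$ (namely, $g(\beta)\uh n$ depends only on $\beta\uh n$ and $f\uh n$, because II's $k$-th response under $f$ depends only on the first $k+1$ moves of I, and $m_\beta\uh(k+1)$ is determined by $\beta\uh\lceil(k+1)/2\rceil$ and $f$); ``by continuity'' alone would only give an oracle bound, not a computable one. Second, to pass from ``$[T]$ has no isolated point'' to ``$T$ is a perfect tree'' you should note that every $s\in T$ has $[s]\cap[T]\neq\emptyset$ by construction, and a nonempty relatively open subset of a closed set without isolated points cannot be a singleton, so $s$ has two incompatible extensions in $T$. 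Both are routine, but worth saying.
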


Theorem~\ref{theorem: MPT}  builds a significant connection between descriptive set theory and recursion theory. It has been a central goal in descriptive set theory to prove lower bounds on the consistency strength of some descriptive set theory theorems (see, for example, Harrington \cite{Harrington78} or Koellner and Woodin \cite{KW10}). 
Despite the seemingly simple form of Theorem \ref{theorem: MPT}, the proof of its consistency relative to $\mathrm{ZF}$ requires the 
existence of infinitely many Woodin cardinals; which is far beyond the strength of $\mathrm{ZF}$.
One of the reasons for the importance of Theorem~\ref{theorem: MPT} is that it was used by Slaman and Steel~\cite{SS88} as a critical tool in their study of Martin's conjecture, one of the central open problems in recursion theory.

We study a natural version of Martin's theorem and a variant, $\wMPT$, before a recursion theory background.

\begin{definition}
{\em Martin's pointed tree theorem}, $\MPT$, states that given a tree $T\subseteq \omega^{<\omega}$, if $[T]$ is cofinal, then $T$ has a pointed subtree.
\end{definition}

\begin{definition}
{\em Weak Martin's pointed tree theorem}, $\wMPT$, states that given a tree $T\subseteq \omega^{<\omega}$, if $[T]$ is cofinal, then $T$ has a perfect subtree.
\end{definition}

In this paper, we are mainly interested in the reverse mathematical strengths of these statements.

 Reverse mathematics is used to gauge the complexity of mathematical theorems by determing precisely which axioms are needed to prove a given theorem.
 For example, Martin and Steel \cite{MarSte89} proved the conclusion of Theorem~\ref{theorem: MPT} under the hypothesis that there are infinitely many Woodin cardinals. A typical question in reverse mathematics would be whether this hypothesis is necessary for the proof of a given statement; and in the case of Theorem~\ref{theorem: MPT} it indeed is, as shown by Koellner and Woodin~\cite{KW10}.
 
When studying reverse mathematics before a recursion theory background one often focuses on  second order arithmetical theories.  In other words one focuses on the question of which theorems can be proven assuming only a certain subset of second order arithmetical axioms. In this context, there exists a group of sets of axioms, the so-called \textit{big five}. These sets of axioms are distinguished from others in that most ``usual" mathematical theorems were proven to be equivalent to one of these five. This is why most researchers in the area consider the \textit{big five} systems to be of central importance for the subject, and why they have received much attention.
%The axioms turned to be surprisingly beautiful. 
In this paper, we use the \textit{big five} to measure the strength of $\MPT$ and $\wMPT$.

Based on Martin's proof of Theorem \ref{theorem: MPT}, it seems that for a proof of $\MPT$ we need $\utilde{\mathbf{\Delta}}^0_4$-$\mathrm{DET}$. However, using results from \cite{HK75} and \cite{MS15}, we show that over $\ACA$ we have that $\MPT$ is equivalent to $\ATR$. As a consequence, the question arises whether we can prove the same equivalence over weaker axiom systems. We prove that, over $\RCA$, $\MPT$ does not even imply $\WKL$. So $\MPT$ can be viewed as a natural theorem incomparable with $\WKL$  and $\ACA$ but ``joining" $\ACA$ to $\ATR$. However the question of whether the equivalence can be proven over $\WKL$ remains open.

$\wMPT$ is obviously implied by $\MPT$. It is also related to other classical results in descriptive set theory. The perfect set theorem ($\PST$) says that  every  uncountable set has a perfect subtree. In the reverse mathematics setting $\PST$ corresponds to $\RPST$, the statement that {\em every uncountable closed set has a perfect subset}. A natural analogue of $\wMPT$ in descriptive set theory is the statement that {\em every  cofinal set has a perfect subset }($\CPST$). It turns out (see Solovay \cite{So70} and Chong and Yu~\cite{CY06}) that $\PST$ and $\CPST$ have the same consistency strength.  Obviously $\RPST$ also implies $\wMPT$. Simpson~\cite{Simp09} shows that, over $\ACA$,  $\RPST$ is equivalent to $\ATR$. However, we prove that $\wMPT$ is strictly weaker than $\ATR$ (and therefore than $\RPST$) and incomparable with $\WKL$ and $\ACA$. Hence we have two mathematical statements which have the same consistency strength but different reverse mathematics strength. Another interesting conclusion is that $\wMPT$ can be viewed as a natural example of a theorem that is not equivalent to any of the \textit{big five}.

We also would like to point out the interesting technique used to prove  Theorem \ref{theorem: wmpt not aca}, which states that, over $\WKL$, $\wMPT$ does not imply  $\ACA$. It demonstrates a natural application of algorithmic randomness theory to reverse mathematics. In fact, in this article, the usage of genericity in all proofs could be replaced with randomness; we keep the genericity because it simplifies the proofs. However, the usage of randomness in the proof of Theorem~\ref{theorem: wmpt not aca} seems necessary since no generic real can be hyperimmune-free. Actually, randomness theory usually provides stronger results, though it requires more sophisticated proofs. 
For example, one can use algorithmic randomness theory to prove  Theorem~\ref{theorem: over rca mpt not imply wkl} over $\mathrm{WWKL}_0$ --- a principle that is very close to $\WKL$ and that essentially says that random reals exist.

\bigskip

We organize the paper as follows: In section 2 we review some background knowledge; in section 3 we investigate $\MPT$ and $\wMPT$ over $\RCA$; in section 4 we investigate them over $\WKL$; and in section 5 over $\ACA$.

% % % % %

\section{Preliminaries}
\subsection{General notations}
For every real $x\in \omega^{\omega}$, let $\bf{x}$ be the Turing degree of $x$.

If $\Phi$ is a Turing functional, then we use $\Phi^{x\uh n}[m]$ to denote the finite string computed from oracle $x$ at stage $m$ with use $n$. 

If $x  \leqT y$ via a total Turing functional, then we write $x  \leqtt y$ and say that $x$ is truth-table reducible to $y$.

We refer the reader to Lerman~\cite{Ler83} and Odifreddi~\cite{Odi} for more recursion theoretical background.

Given a tree $T$, we use $[T]$ to denote the collection of the infinite paths through~$T$. For a finite string $\sigma$, let $[\sigma]$ denote the collection of reals extending $\sigma$.

%\subsection{Recursion theory}

A set $D\subseteq 2^{\omega}$ is {\em dense} if for every $\sigma$ there is some $\tau\in D$ so that $\tau\succ \sigma$, i.e., $\tau$ is an extension of $\sigma$.

A real $g\in 2^{\omega}$ is {\em arithmetically generic}, if for every arithmetical dense set $D\subseteq 2^{<\omega}$ there is some $n$ so that either
\begin{itemize}
\item  $x\uh n\in D$; or
\item $\forall \sigma\,(\sigma\succ x\uh n\rightarrow \sigma\not\in D)$.
\end{itemize}

%If there is a recursive function $f$ so that $\forall n (y\uh n=\Phi^{x\uh f(n)}\uh n [f(n)])$ for some Turing functional $\Phi^x$, then we say that $y\leq_{tt} x$. 

A real $x$ is {\em hyperimmune-free} if every $x$-recursive function is dominated by a recursive function. It is obvious that if $x$ is hyperimmune-free and $y\leqT x$, then $y\leq_{tt} x$. By Jockusch and Soare's Hyperimmune-Free Basis Theorem~\cite{JockSo72}, every nonempty $\Pi^0_1$ subset of $2^{\omega}$ contains a hyperimmune-free real.

We say that $x\gg y$ if there is a real $z\leqT x$ in $2^{\omega}$ so that $\forall e(z(e)\neq \Phi_e^y(e))$.  There is a nonempty $\Pi^0_1$ subset of $2^{\omega}$ in which every real $x$ has the property $x\gg \emptyset$. So by the Hyperimmune-Free Basis Theorem there is a hyperimmune-free real $x\gg \emptyset$.

A partial function $p:\omega\to \omega$ is recursively bounded if there is a recursive function $f:\omega\to \omega$ so that for every $n$, $p(n)\downarrow\rightarrow p(n)<f(n)$. The following result should be well known.

\begin{proposition}[Folklore]\label{theorem: total recursive}
If $x\gg \emptyset$, then for every partial recursive function $\Phi:\omega\to \omega$ which is recursively bounded, there is a total $x$-recursive function $g$ extending $\Phi$.
\end{proposition}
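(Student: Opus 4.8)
The plan is to place $\Phi$ inside a nonempty, recursively bounded $\Pi^0_1$ class of total functions and then invoke the fact that a real $\gg\emptyset$ computes a member of every such class. As a harmless normalization one may assume $f(n)\geq 1$ for all $n$: wherever $f(n)=0$ the function $\Phi$ is already undefined at $n$, so $\Phi$ is still bounded by $n\mapsto\max\{f(n),1\}$. Now let $S$ be the set of all $\sigma\in\omega^{<\omega}$ such that $\sigma(n)<f(n)$ for every $n<|\sigma|$ and such that $\sigma(n)=\Phi(n)$ whenever $n<|\sigma|$ and $\Phi(n)$ converges within $|\sigma|$ steps. Then $S$ is a recursive, finitely branching tree, and it is infinite, since for each $k$ the string of length $k$ whose $n$-th entry is $\Phi(n)$ if this converges within $k$ steps and is $0$ otherwise lies in $S$; hence $[S]$ is a nonempty $\Pi^0_1$ class. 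A direct check shows that $h\in[S]$ exactly when $h$ is total, satisfies $h(n)<f(n)$ for all $n$, and extends $\Phi$; so any $g\in[S]$ with $g\leqT x$ is precisely the function the proposition asks for.

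It remains to produce such a $g$, that is, to see that every real $x\gg\emptyset$ computes a member of $[S]$. This part is standard, so I would only sketch it. Recursively in $x$ one builds $\sigma_0\prec\sigma_1\prec\cdots$ in $S$ with $[S]\cap[\sigma_i]\neq\emptyset$ throughout, and sets $g=\bigcup_i\sigma_i$. Given $\sigma_i$, of length $n$ say, the set of $j<f(n)$ for which $\sigma_i$ followed by $j$ has no extension in $[S]$ is, uniformly in $\sigma_i$, a c.e. set, and it is a \emph{proper} subset of $\{0,\dots,f(n)-1\}$ because $[S]\cap[\sigma_i]\neq\emptyset$. The hypothesis $x\gg\emptyset$ is exactly what permits one to select, uniformly and recursively in $x$, some $j<f(n)$ outside this c.e. set; one clean way to see this is to recall that $x\gg\emptyset$ computes a member of every nonempty $\Pi^0_1$ subclass of $2^\omega$ and to code $\prod_n\{0,\dots,f(n)-1\}$ into Cantor space in the usual way (see \cite{Odi}). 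Taking $\sigma_{i+1}(n)$ to be such a value keeps $[S]\cap[\sigma_{i+1}]$ nonempty, and the construction goes through.

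The verifications that $S$ is recursive, is a tree, is infinite, and has the stated paths are all routine once one notices that ``$h$ does not extend $\Phi$'' is a $\Sigma^0_1$ condition on $h$, so that the extensions of $\Phi$ indeed form a $\Pi^0_1$ class. The one point that genuinely uses the hypothesis — and the reason the statement is not entirely trivial — is the selection step: a $\gg\emptyset$ oracle must suffice to pick an element of $\{0,\dots,f(n)-1\}$ outside an arbitrary uniformly-c.e. proper subset of it. That is the folklore content of the proposition, and in the write-up I would either reproduce the short coding argument for it or simply cite it.
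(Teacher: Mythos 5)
Your argument is correct, but it takes a genuinely different route than the paper's. The paper builds $g$ directly: it defines a partial recursive $\Psi(e,\langle n,m\rangle)$ that equals $1$ when $\Phi(n)\!\downarrow\,=m$ and equals $0$ when $\Phi(n)$ converges to some other value below $f(n)$, applies the $s$-$m$-$n$ theorem to get indices $h(\langle n,m\rangle)$ with $\Phi_{h(\langle n,m\rangle)}(e)=\Psi(e,\langle n,m\rangle)$, and then reads off $g(n)$ as the least $m<f(n)$ with $z(h(\langle n,m\rangle))=0$, where $z\leqT x$ is the diagonal function witnessing $x\gg\emptyset$. Your route instead packages the bounded total extensions of $\Phi$ into a nonempty recursively bounded $\Pi^0_1$ class $[S]$ and appeals to the basis property of $\gg\emptyset$ reals, reducing to the $2^\omega$ case by the usual coding. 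Both are valid. The paper's version is more self-contained, using nothing beyond the definition of $\gg$ and the $s$-$m$-$n$ theorem, with no detour through basis theorems; yours is more conceptual and isolates exactly what is being used, namely that a $\gg\emptyset$ oracle can uniformly select an element of a known finite set outside a proper uniformly c.e.\ subset. One caveat: you cite the binary $\Pi^0_1$ basis fact, which in this paper is only recorded as a remark \emph{after} (and essentially as a consequence of) this proposition; that is not a logical circularity, since the fact is independently standard, but your dependence runs opposite to the paper's order of exposition, so in a write-up you would want to cite it externally rather than lean on the surrounding text.
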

\begin{proof}
Let a partial recursive function $\Phi:\omega\to \omega$ with a recursive bound~$f$ be given. Define a partial recursive function $\Psi:\omega^2 \to 2$ so that
\[\Psi(e,\langle n,m\rangle )= \left\{
\begin{array}{r@{\quad\quad}l}
1 & \Phi(n)\downarrow =m , \\
0 & \exists k<f(n)(k\neq m\wedge \Phi(n)\downarrow =k),\\
\uparrow & \mbox{otherwise.}\\
\end{array}
\right.\]

 By the $s$-$m$-$n$-Theorem, there is a recursive function $h$ so that 
 \[\Psi(e,\langle n,m\rangle )=\Phi_{h(\langle n,m \rangle)}(e).\] 
 Let $z\leqT x$ be in $2^{\omega}$ so that $\forall e(z(e)\neq \Phi_e(e))$. Define $g(n)=m$ if~$m$ is the least number $<f(n)$  so that $z(h(\langle n,m\rangle))=0$ if any; otherwise let $g(n)=0$.  Obviously $g$ is a total $x$-recursive function extending $\Psi$.
\end{proof}

Note that for every recursive tree $T\subseteq 2^{<\omega}$ and real $x\gg \emptyset$, if $[T]$ is not empty, then there must be some $y\leqT x$ such that $y \in [T]$.

We need the following technical lemma to build a perfect tree by projection. Note that for the purpose of this lemma a tree is a subset of $\omega^{<\omega}\times \omega^{<\omega}$. The motivation is that we will later apply the lemma to game theoretic trees, for games with two players making moves alternately. 
%The moves of each player will then be represented by a sequence in $2^\omega$. If both players' strategies are combined into a single game tree, we get an object as above. 
For a tree $T$ of this form, we say that $(\sigma_1,\tau_1)$ extends $(\sigma_0,\tau_0)$ if $\sigma_1$ extends $\sigma_0$ and $\tau_1$ extends $\tau_0$. Then it is also clear what it means for such a $T$ to be perfect.
\begin{lemma}\label{lemma: technique to construct perfect}
Suppose that $T\subseteq \omega^{<\omega}\times \omega^{<\omega}$ is a perfect tree so that for every $(f_0,g_0),(f_1,g_1)\in [T]$, if $g_0=g_1$, then $f_0=f_1$.  Then there is a perfect tree $S_1\leqT T$ so that for every infinite path $g\in [S_1]$, there is some $f$ so that $(f,g)\in [T]$.
\end{lemma}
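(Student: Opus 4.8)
The plan is to build $S_1$ as an isomorphic copy of $2^{<\omega}$ living inside the second-coordinate projection of $T$, done recursively in $T$. Concretely, I will construct, recursively in $T$, nodes $(\sigma_s,\tau_s)\in T$ for $s\in 2^{<\omega}$ together with a strictly increasing sequence $\ell_0<\ell_1<\cdots$ such that: (i) $(\sigma_{s'},\tau_{s'})$ extends $(\sigma_s,\tau_s)$ whenever $s'\succeq s$; (ii) $\tau_{s^\frown 0}$ and $\tau_{s^\frown 1}$ are incomparable; and (iii) $|\tau_s|=\ell_{|s|}$ for every $s$, so that the $\tau_s$ with $|s|=n$ are $2^n$ pairwise incomparable strings of a common length (and we also extend the first coordinates so that $|\sigma_s|\to\infty$ along each branch). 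Then I set $S_1=\{\rho:\rho\preceq\tau_s\text{ for some }s\in 2^{<\omega}\}$, the downward closure of $\{\tau_s:s\in 2^{<\omega}\}$. This $S_1$ is visibly perfect and $\leqT T$, and the point will be that $[S_1]=\{\bigcup_n\tau_{x\uh n}:x\in 2^\omega\}$ and that each such path is the second coordinate of a path of $T$.

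The one step with real content — and the only place the hypothesis on $T$ enters — is the claim: for every $(\sigma,\tau)\in T$ there are $(\sigma_0,\tau_0),(\sigma_1,\tau_1)\in T$ extending $(\sigma,\tau)$ with $\tau_0\perp\tau_1$. To see it, note that since $T$ is perfect, more than one path of $T$ passes through $(\sigma,\tau)$; pick distinct $(f_0,g_0)\neq(f_1,g_1)\in[T]$ both extending $(\sigma,\tau)$. Reading the hypothesis of the lemma contrapositively, $g_0\neq g_1$; since both $g_i$ extend $\tau$, they first disagree at a position past $|\tau|$, yielding a length $m>|\tau|$ with $g_0\uh m\perp g_1\uh m$. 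Then $(\sigma_i,\tau_i):=(f_i\uh m,g_i\uh m)$ works, as $(f_i\uh m,g_i\uh m)\in T$ and $\tau_0\perp\tau_1$. I expect this to be essentially the only obstacle: without the injectivity hypothesis all the branching of $T$ could sit in the first coordinate and the projection could fail to be perfect.

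Granting the claim, the construction is routine bookkeeping. Start with $(\sigma_\emptyset,\tau_\emptyset)$ equal to the root of $T$. At stage $n$, apply the claim to each of the finitely many $(\sigma_s,\tau_s)$ with $|s|=n$ to obtain the $\tau$-incomparable pair of extensions labelling $s^\frown 0$ and $s^\frown 1$; then, using that $T$ is perfect (so every node of $T$ lies on an infinite path of $T$), extend all $2^{n+1}$ of the resulting nodes along $T$ to a common coordinate length $\ell_{n+1}>\ell_n$. Every search here halts: the pair required by the claim provably exists, and an extension of a given node to a prescribed length exists because that node lies on an infinite path of $T$. Hence the construction, and so $S_1$, is recursive in $T$.

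It remains to verify $[S_1]=\{g_x:x\in 2^\omega\}$, where $g_x:=\bigcup_n\tau_{x\uh n}$, and that this gives the conclusion. Each $g_x$ is an infinite path of $S_1$ since $|\tau_{x\uh n}|=\ell_n\to\infty$. Conversely, if $g\in[S_1]$, then for each $n$ the string $g\uh\ell_n$ is a length-$\ell_n$ node of $S_1$; by (i)--(iii) the length-$\ell_n$ nodes of $S_1$ are exactly the pairwise incomparable strings $\tau_t$ with $|t|=n$, so $g\uh\ell_n=\tau_{s_n}$ for a unique such $s_n$, and the nesting $\tau_{s_n}\prec\tau_{s_{n+1}}$ together with the fact that $\tau$'s at incomparable nodes are incomparable forces $s_n\prec s_{n+1}$; thus $g=g_x$ for $x=\bigcup_n s_n$. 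Finally $f_x:=\bigcup_n\sigma_{x\uh n}$ is total (by the padding of first coordinates) and $(f_x\uh\ell_n,g_x\uh\ell_n)=(\sigma_{x\uh n},\tau_{x\uh n})\in T$ for every $n$, so $(f_x,g_x)\in[T]$; hence every $g\in[S_1]$ is the second coordinate of a path of $T$, as required. The main difficulty is the claim of the second paragraph and making sure the unbounded searches in the construction genuinely terminate; the rest is standard perfect-tree bookkeeping.
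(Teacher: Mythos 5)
Your proposal is correct and follows the same approach as the paper: at every node use perfectness of $T$ plus the injectivity hypothesis (contrapositively) to extract two $T$-nodes with incomparable second coordinates, iterate $T$-recursively to build a full binary splitting tree in the second coordinate, and recover a path of $T$ above each path of $S_1$ by taking the union of the recorded first coordinates. The paper's version is slightly terser (it does not pad all nodes at a given stage to a common length), but the construction and the justification of each step are the same as yours.
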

\begin{proof}
Fix a tree $T$ as in the assumption. We $T$-recursively build a helper tree $S_0$ and the desired tree $S_1$ stage by stage. 

\bigskip

At stage $0$, let $(\emptyset,\emptyset)\in S_0$ and $\emptyset\in S_1$.

We assume that at stage $n$, for every $(\sigma,\tau)\in S_0$, there are $\tau_0|\tau_1$ extending $\tau$ so that there are $\sigma_0, \sigma_1 $ extending  $\sigma$ so that $(\sigma_0,\tau_0),(\sigma_1,\tau_1)\in T$.

We claim that the assumption holds at stage $0$.  Since $T$ is perfect, there must exist two distinct paths $(f_0,g_0), (f_1,g_1)\in [T]$. If we always have $g_0=g_1$ then, by the assumption on $T$, $f_0$ must be equal to $f_1$, which is a contradiction to the fact that $T$ is perfect. So fix $g_0$, $g_1$ and some $n$ such that $g_0(n)\neq g_1(n)$. Write $\tau_0=g_0\uh n+1$, $\tau_1=g_1\uh n+1$, $\sigma_0=f_0\uh n+1$, and $\sigma_1=f_1\uh n+1$. %for the corresponded $f_0$ and $f_1$.

At stage $n+1$, for every leaf $(\sigma,\tau)\in S_0$, select  $(\sigma_0,\tau_0),(\sigma_1,\tau_1)\in T$ as defined above and put them into $S_0$. Also put $\tau_0,\tau_1$ into $S_1$. 
With the same argument as above for stage~$0$ the assumption remains true at stage $n+1$.

Now it is clear that  $S_1$ is a perfect tree. Suppose that $g\in [S_1]$, then there must be $(\sigma_0,\tau_0)\prec (\sigma_1,\tau_1)\prec \dots $ constructed at stages $0,1,\dots$, respectively, so that $\tau_0\prec \tau_1\prec\dots \prec g$. Let $f=\bigcup_{i\in \omega}\sigma_i$. Then $(f,g)\in [T]$.
Thus $S_1$ is a required. 
\end{proof}

\subsection{Reverse mathematics}
We refer to Simpson~\cite{Simp09} for the background on reverse mathematics. We recall that $\RCA$ is the most basic axiom system for the second order arithmetical theory. The axioms in the stronger system $\WKL$ state that every infinite binary tree has an infinite path. The even stronger system $\ACA$ includes all arithmetical comprehension axioms, and $\ATR$ ensures that arithmetical transfinite recursion is allowed. Together with  $\utilde{\mathbf{\Pi}}^1_1$-$\mathrm{CA}_0$, these systems form the famous \textit{big five} hierarchy.

Given a theory $T$ and a proposition $\varphi$, to show that $T\not\vdash \varphi$, we will use model-theoretical arguments. A model $\mathcal{M}$ for the second order arithmetical language has the form $(N,M,0,1,+,\times,<)$, which is a two-sorted model. The first sort $N$ contains the natural numbers, and the second sort $M$ the subsets of the natural numbers, respectively, that exist in the model at hand.  In this paper, we will always have $N=\omega$ and $M\subseteq \omega^{\omega}$, that is, we only focus on so-called $\omega$-models. 

\subsection{Game theory} We recall the basic game theoretical notions used in this article, and refer to Moschovakis~\cite{Mos09} for more details.

Given a set $A\subseteq \omega^{\omega}$, we define an infinite game $G_A$ with  perfect information as follows: The game has two players labelled {\bf I} and {\bf II}. The  game is played by letting the players choose natural numbers alternately for $\omega$-many 
steps. Each game generates a real   $x=(n_0,m_0,\dots, n_i,m_i,\dots)\in \omega^{\omega}$ where $n_i$ and $m_i$ are the numbers played by {\bf I} and {\bf II}, respectively, at their $i$-th move.
  If $x\in A$, then {\bf I} wins the game. Otherwise, {\bf II} wins.

A strategy  is a function  $h:\omega^{<\omega}\to \omega$.
For a set $A\subseteq \omega^{\omega}$ and the corresponding game $G_A$, if $h$ is {\bf I}'s strategy and {\bf II} plays $g$, then as usual $h* g$ denotes the outcome generated by $h$ and $g$. If $h$ is {\bf II}'s strategy and {\bf I} plays $f$, then $f*h$ denotes the outcome generated by $h$ and $f$.

{\bf I} has a {\em winning} strategy $h$ for the game $G_A$ if for every  $g\in \omega^{\omega}$, the real $h*g\in A$.  {\bf II} has a {\em winning} strategy $h$ for the game $G_A$ if for every $f\in \omega^{\omega}$, the real  $f*h\not\in A$.

A game $G_A$ is {\em determined} if either {\bf I} or {\bf II} has a winning strategy.
Given a class $\Gamma\subseteq \mathscr{P}(\omega^{\omega})$, $\Gamma$-$\DET$ says that $G_A$ is determined for every $A\in \Gamma$.

The following remarkable  connection between game theory and recursion theorem was established by Martin. Call a set $A$ of reals {\em Turing-invariant} if for every $x\in A$, $y\equivT x$ implies $y\in A$.

\begin{theorem}[Martin \cite{Martin68}]
Assume every set is determined. Then every set $A$ of reals that is Turing-invariant and cofinal contains an upper cone with respect to Turing reducibility.
\end{theorem}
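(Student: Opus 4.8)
The plan is to feed the determinacy hypothesis into the game $G_A$ and split on who wins. Since every set is determined, in particular $G_A$ is determined, so either {\bf I} or {\bf II} has a winning strategy $h$. Before the case split I would isolate the one coding observation that drives everything: if a player follows a strategy $h$ while the opponent simply plays out the bits of a fixed real $z$ with $z\geqT h$ as their successive moves, then the resulting outcome $w$ satisfies $w\equivT z$. Indeed $w\leqT h\oplus z\equivT z$ because $h\leqT z$, and conversely $z\leqT w$ since the opponent's moves sit at a recursive set of coordinates of the play $w$ and can be read off.

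In the first case, {\bf I} has a winning strategy $h$; I claim the cone $\{x:x\geqT h\}$ is contained in $A$. Given any $x\geqT h$, let {\bf II} play the real $x$ against $h$. The outcome $w=h*x$ lies in $A$ because $h$ is winning for {\bf I}, and $w\equivT x$ by the coding observation. Since $A$ is Turing-invariant, $x\in A$. Hence $A$ contains the upper cone above (the degree of) $h$, which is exactly the conclusion sought.

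In the second case, {\bf II} has a winning strategy $h$; I would derive a contradiction with cofinality of $A$. Using cofinality, pick $y\in A$ with $y\geqT h$. Let {\bf I} play the real $y$ against $h$. Then $w=y*h\equivT y$ by the coding observation, but $w\notin A$ because $h$ is winning for {\bf II}. Since $y\in A$ and $A$ is Turing-invariant, $w\in A$ as well, a contradiction. So this case is impossible, and therefore {\bf I} has a winning strategy and the first case applies.

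This is essentially Martin's original argument, and I expect no real obstacle: the only point that needs to be stated carefully is the coding observation, i.e.\ checking that when one player's moves are the successive values of a real $z$ and the strategy is $\leqT z$, the play has the same Turing degree as $z$. This is routine once one notes that the two players' moves each occupy a recursive set of positions in the generated real, so the play is recursively interchangeable with the pair (strategy, opponent's real).
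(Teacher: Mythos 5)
Your argument is correct and is essentially Martin's original cone argument: determinacy of $G_A$ gives a winning strategy for one player; the coding observation shows that whenever the opponent plays out the bits of a real $z\geq_T h$, the resulting play has the same Turing degree as $z$; and then the two cases (player~{\bf I} wins, yielding the cone above the degree of $h$; player~{\bf II} wins, contradicting cofinality via Turing invariance) go through exactly as you wrote them. The paper states this theorem only with a citation to Martin and does not reproduce a proof, so there is no distinct in-paper argument to compare against; yours matches the standard one.
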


\label{pageref_gamma_TD} {\em $\Gamma$-$\mathrm{TD}$} says that if an $A\in \Gamma$ which is  Turing-invariant is also cofinal, then it contains an upper cone of Turing degrees.

The connection between game theory and reverse mathematics was initiated by Blass and Steel.
\begin{theorem}[Blass \cite{Blass72} and Steel \cite{Steel76}]\label{theorem: atr implies 1 det}
Over $\RCA$,  $\ATR$ implies $\utilde{\mathbf{\Pi}}^0_1$-$\DET$.
\end{theorem}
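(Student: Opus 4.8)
The natural plan is the Gale--Stewart strategy-stealing argument, formalised by transfinite recursion; supplying exactly this kind of recursion is what $\ATR$ is designed for. Unwinding the definition, $\utilde{\mathbf{\Pi}}^0_1$-$\DET$ asserts that $G_A$ is determined whenever $A=[T]$ for a tree $T\subseteq\omega^{<\omega}$ (coded by the real parameter of the $\utilde{\mathbf{\Pi}}^0_1$ set). So I would fix such a $T$ and observe that in $G_{[T]}$ player {\bf I}'s winning set is closed, while player {\bf II}'s winning condition --- that the play eventually leaves $T$ --- is open.

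First I would introduce the monotone arithmetical operator $\Theta$ acting on sets $X$ of positions (finite sequences): a position $p$ is put into $\Theta(X)$ precisely when $p\notin T$, or when $p\in T$ with $|p|$ odd ({\bf II} to move) and $p^\frown m\in X$ for some $m$, or when $p\in T$ with $|p|$ even ({\bf I} to move) and $p^\frown n\in X$ for every $n$. The intended least fixed point $B$ of $\Theta$ is the set of positions from which {\bf II} can force the play out of $T$ in finitely many moves. Using $\ATR$ I would iterate $\Theta$ transfinitely along well-orderings $\prec$, forming $W^{\prec}_a=\Theta\bigl(\bigcup_{b\prec a}W^{\prec}_b\bigr)$, and aim for the dichotomy: either $\langle\rangle$ enters one of these hierarchies, or it lies outside the fixed point $B$.

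In the first case, the stage at which a position first enters the hierarchy is a rank function, and {\bf II} wins $G_{[T]}$ by always moving so as to strictly decrease the rank of the current position --- possible by the two positive clauses in the definition of $\Theta$ --- after which well-foundedness of $\prec$ forces the play out of $T$ in finitely many steps (note that rank $0$ is exactly the set of positions outside $T$). In the second case {\bf I} wins by the ``stay outside $B$'' strategy: if the current position $p\notin B$ has $|p|$ even, then failure of the even clause of $\Theta$ at $p$ yields a move $n$ with $p^\frown n\notin B$, and in particular $p^\frown n\in T$; if $|p|$ is odd, then failure of the odd clause gives $p^\frown m\notin B$ (hence $\in T$) for every reply $m$ of {\bf II}. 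So {\bf I} keeps the play inside $T$ forever and wins. Reading these strategies off the hierarchy, and checking that the winning one really wins (the ranks strictly decrease, resp.\ the play never escapes $T$), needs only arithmetical comprehension.

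The one real obstacle is to legitimise the dichotomy, and the use of $B$ as an actual set of the model in the second case, inside $\ATR$ alone: $B$ is a $\Pi^1_1$ inductive definition which in general (when $T$ suitably codes Kleene's $\mathcal{O}$) is $\Pi^1_1$-complete, so it cannot be produced by comprehension, and $\Pi^1_1$-comprehension is not available. The resolution is that the closure ordinal of an arithmetical monotone inductive operator on $\omega$ (and positions are coded by natural numbers) is at most $\omega_1^{\mathrm{CK}}$ of its parameters, while $\ATR$ guarantees that the ambient $\omega$-model contains well-orderings long enough to reach that ordinal; iterating $\Theta$ along such a $\prec$ then genuinely closes off to a fixed point, so $B$ exists in the model (it is $\Delta^1_1$ in $\prec$ together with the hierarchy) and the case split above is exhaustive. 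This is precisely where the strength of $\ATR$ --- equivalently, $\Sigma^1_1$ separation, or comparability of well-orderings --- is used, and it explains why $\ATR$ is the right level: $\ACA$ cannot carry out the transfinite recursion at all, while $\Pi^1_1$-comprehension would be more than is needed.
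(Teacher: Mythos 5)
The paper does not reprove this; it is cited to Blass and Steel, so there is no in-paper argument to compare against. Your high-level plan is the right one: set up the monotone operator $\Theta$, iterate it, let player {\bf II} play rank-decrease if $\langle\rangle$ enters the hierarchy and let player {\bf I} play ``stay outside $B$'' otherwise. You also correctly identify that the whole difficulty is to get $B$, or a usable substitute, as an actual set inside an $\omega$-model of $\ATR$. The rank-decrease half and the ``once you have a $\Theta$-closed set $B$ omitting $\langle\rangle$, {\bf I} wins by staying outside it'' half are both fine.

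The resolution you offer for the difficulty, however, does not work. It is \emph{not} a theorem of $\ATR$ that, for every parameter $X$ in the model, the model contains a well-ordering of length $\omega_1^{\mathrm{CK},X}$. Since $\ATR$ proves $\Delta^1_1$-comprehension, its $\omega$-models are closed under hyperarithmetic reducibility; a well-ordering of length $\geq\omega_1^{\mathrm{CK}}$ hyperarithmetically computes Kleene's $\mathcal{O}$, so a model of $\ATR$ omitting $\mathcal{O}$ (and such models exist, since otherwise relativising would give $\ATR\vdash\utilde{\mathbf{\Pi}}^1_1\text{-}\mathrm{CA}_0$) contains no well-ordering of that length. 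For a recursive $T$ whose operator $\Theta$ has closure ordinal exactly $\omega_1^{\mathrm{CK}}$, the hierarchy in such a model never stabilises along any well-ordering the model can see, so ``iterating $\Theta$ along such a $\prec$ then genuinely closes off to a fixed point'' fails and your case split is not exhaustive. Equivalently, $\ATR$ simply does not prove that arithmetical monotone operators have least fixed points. Steel's argument repairs exactly this point with a pseudo-hierarchy: if $\langle\rangle$ never enters the hierarchy along any well-ordering of the model, then the $\Sigma^1_1$ class of linear orderings $\prec$ admitting a $\Theta$-hierarchy with $\langle\rangle$ omitted at every level contains all well-orderings, so by $\Sigma^1_1$-bounding (available in $\ATR$) it contains an ill-founded $\prec_0$ with a pseudo-hierarchy $(W^{\prec_0}_a)_a$. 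The ill-founded part $I$ of $\prec_0$ is $\Delta^1_1$ and hence a set by $\Delta^1_1$-CA; one checks that $B'':=\bigcap_{a\in I}W^{\prec_0}_a$ is a set, is closed under $\Theta$ (because $I$ has no $\prec_0$-least element, so monotonicity pushes $\Theta(B'')$ into every $W^{\prec_0}_a$ with $a\in I$), and omits $\langle\rangle$; then {\bf I} plays ``stay outside $B''$'' exactly as in your sketch. That bounding/pseudo-hierarchy step, replacing ``closure is reached'' by ``the recursion overshoots into an ill-founded ordering,'' is the idea your proposal is missing.
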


\subsection{Algorithmic randomness}

In this article, the theory of algorithmic randomness will only serve as a tool. We refer the reader to Nies~\cite{Niesbook09} and Downey and Hirschfeldt~\cite{DH10} for details on the topic.

Given a Turing machine $M$, define its Kolmogorov complexity function as $$C_M(\sigma)=\min\{|\tau|\mid M(\tau)=\sigma\}.$$ The universal Turing machine $U$ induces an optimal Kolmogorov complexity function up to a constant. That is, for every Turing machine $M$, there is a constant $c_M$ so that $\forall \sigma(C_U(\sigma)\leq C_M(\sigma)+c_M)$. Usually, $U$ is fixed and the subscript omitted.

A real $x\in 2^{\omega}$ is {\em random} if for every recursive function $f\colon\omega\to \omega$ with \[{\sum_{n\in \omega}2^{-f(n)}<\infty},\] there is a constant $c$ so that for all $n$, $C(x\uh n)\geq n-f(n)-c$. In particular, if $x$ is random, then there exists a $ c$ such that for all $n$, $C(x\uh n)\geq \frac{n}{2}-c$. There is a nonempty $\Pi^0_1$ set that only contains random reals. So if $x\gg \emptyset$, then $x$ computes a random real.

\section{Over $\RCA$}

\begin{lemma}\label{lemma: recursive pointed tree}
 Suppose that $T\subseteq \omega^{<\omega}$ is a recursive tree so that there is a nonrecursive infinite path $x\in [T]$ that is Turing-below some arithmetically generic real $g \in 2^{\omega}$. Then $T$ has a  recursive perfect subtree.
\end{lemma}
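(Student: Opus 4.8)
We plan to prove the lemma by a forcing/genericity argument, phrased concretely in terms of dense sets so that the construction comes out \emph{recursive}. Fix a Turing functional $\Phi$ with $\Phi^{g}=x$. For a finite string $q\in 2^{<\omega}$ write $\Phi^{q}$ for the longest $s$ such that, for every $i<|s|$, the computation $\Phi^{q}(i)$ halts with use at most $|q|$ and output $s(i)$; then $q\mapsto\Phi^{q}$ is monotone ($q\subseteq q'$ implies $\Phi^{q}\subseteq\Phi^{q'}$), the relation ``$\Phi^{q}\supseteq s$'' is $\Sigma^0_1$, and, since $x=\Phi^{g}\in[T]$ is total, $\Phi^{g\uh m}\in T$ for all $m$ and $\bigcup_{m}\Phi^{g\uh m}=x$. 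Call $q$ \emph{splitting} if there are $q_{0},q_{1}\supseteq q$ and $s_0,s_1\in T$ with $\Phi^{q_0}\supseteq s_0$, $\Phi^{q_1}\supseteq s_1$ and $s_0,s_1$ incomparable, and let $B$ be the set of non-splitting strings; then $B$ is $\Pi^0_1$ and is closed under taking extensions.

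The first and main step is to locate a finite initial segment $p_{0}$ of $g$ that has \emph{no} extension in $B$. Apply arithmetical genericity of $g$ to the dense arithmetical set $D=B\cup\{q:\text{no extension of }q\text{ lies in }B\}$ to get $k$ with $g\uh k\in D$. I claim $g\uh k\notin B$: if $g\uh k\in B$, then for any $q\supseteq g\uh k$ and $s\in T$ with $\Phi^{q}\supseteq s$ the string $s$ is comparable with every $\Phi^{g\uh m}$ $(m\ge k)$, and since $\bigcup_{m}\Phi^{g\uh m}=x$ is infinite this forces $s\subseteq x$; hence $x(i)=v$ if and only if there are $q\supseteq g\uh k$ and $s\in T$ with $\Phi^{q}\supseteq s$, $|s|>i$ and $s(i)=v$, a $\Sigma^0_1$ description of the graph of the total function $x$ with the finite parameter $g\uh k$, making $x$ recursive and contradicting the hypothesis. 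So $g\uh k\notin B$, and since $g\uh k\in D$ it has no extension in $B$; set $p_{0}=g\uh k$.

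The second step is then a routine recursive construction, which goes through precisely because every extension of $p_{0}$ is splitting. Define $\sigma_{\rho}\in 2^{<\omega}$ and $t_{\rho}\in T$ for $\rho\in 2^{<\omega}$ by recursion on $|\rho|$, maintaining $\sigma_{\rho}\supseteq p_{0}$, $t_{\rho}\in T$ and $\Phi^{\sigma_{\rho}}\supseteq t_{\rho}$: let $\sigma_{\emptyset}=p_{0}$ and $t_{\emptyset}=\Phi^{p_0}$ (which lies in $T$, being an initial segment of $x$); given $\sigma_{\rho}$, it is splitting, so we may recursively search for and fix $q_{0},q_{1}\supseteq\sigma_{\rho}$ and incomparable $t_{\rho^\frown 0},t_{\rho^\frown 1}\in T$ with $t_{\rho^\frown i}\supseteq t_{\rho}$ and $\Phi^{q_{i}}\supseteq t_{\rho^\frown i}$, and set $\sigma_{\rho^\frown i}=q_{i}$ (such $q_i$, $t_{\rho^\frown i}$ exist: take $\Phi^{q_0},\Phi^{q_1}\in T$ witnessing that $\sigma_\rho$ is splitting, which automatically extend $\Phi^{\sigma_\rho}\supseteq t_\rho$). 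Then the $t_{\rho}$ form a coherent system in $T$ with $|t_{\rho}|>|\rho|$ and with $t_{\rho^\frown 0},t_{\rho^\frown 1}$ incomparable extensions of $t_{\rho}$, so $S=\{s:\exists\rho\ (s\subseteq t_{\rho})\}$ is a subtree of $T$ (as each $t_\rho\in T$ and $T$ is a tree), it is recursive (to decide $s\in S$ only the finitely many $t_{\rho}$ with $|\rho|\le|s|$ matter), and it is perfect. This gives the desired recursive perfect subtree.

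I expect the only real obstacle to be the first step: the whole ``perfectness'' is concentrated into the single requirement that $p_{0}$ have no extension in $B$, and within that step the two points needing care are the arithmetical bookkeeping (verifying that $B$, hence $D$, is arithmetical so that genericity applies) and the observation that an initial segment of $g$ lying in $B$ would make $x$ recursive — which is exactly where the hypothesis that $x$ is nonrecursive enters.
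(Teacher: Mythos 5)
Your proof is correct and follows essentially the same route as the paper's: use arithmetical genericity of $g$ together with nonrecursiveness of $x=\Phi^g$ to locate a prefix of $g$ past which $\Phi$ admits splitting into $T$, then carry out a recursive splitting construction; the only organizational difference is that you package this into a single dense set, where the paper uses two ($D_0$ forcing $\Phi^\sigma\in T$, $D_1$ denying eventual determination of the values). One small slip in your parenthetical justification: the strings $\Phi^{q_0},\Phi^{q_1}$ need not themselves lie in $T$ --- only the witnesses $s_0,s_1\in T$ do --- but those witnesses do automatically extend $t_\rho$, since if, say, $s_0\subseteq t_\rho\subseteq\Phi^{\sigma_\rho}\subseteq\Phi^{q_1}$ then $s_0$ and $s_1$ would both be initial segments of $\Phi^{q_1}$ and hence comparable, contradicting the choice of witnesses; so the recursive search terminates as needed and the construction goes through.
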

\begin{proof}
 Suppose that $T$ is a recursive tree with a nonrecursive infinite path $x\in [T]$ so that $x=\Phi^g$ for some arithmetical real $g$.

Let $D_0=\{\sigma \mid \Phi^{\sigma}\not\in T\}$. Clearly, $D_0$ is arithmetical.   Since $g$ is arithmetically generic and  $\Phi^g\in [T]$, there must be some $n_0$ so that for every $\sigma\succ g\uh n_0$, $\sigma\not\in D_0$ and so $\Phi^{\sigma}\in T$.

Let 
\[D_1=\left\{\sigma\scmid \begin{array}{c}
\sigma\succ g\uh n_0\,\wedge\, \exists n\,\forall m\geq n\, \forall \tau_0\succ \sigma\,\forall \tau_1\succ \sigma\colon\\ (\Phi^{\tau_0}(m)\!\downarrow \wedge\, \Phi^{\tau_1}(m)\!\downarrow\,\rightarrow\, \Phi^{\tau_0}(m)=\Phi^{\tau_1}(m))
\end{array}
\right\}.\]
Since  $g$ is arithmetically generic and  $\Phi^g$ is  not recursive, there must be some $n_1\geq n_0$ so that  for every $\sigma\succ g\uh n_1$, $\sigma\not\in D_1$. So for every $\sigma\succ g\uh n_1$ and $n$, there are $\tau_0\succ \sigma$, $\tau_1\succ \sigma$ and $m>n$ so that $\Phi^{\tau_0}(m)\neq\Phi^{\tau_1}(m)$.

Now, by using this property of $D_1$,  it is routine to construct a recursive perfect tree $S\subseteq [g\uh n_1]$ so that the the set $T_1=\{\Phi^{\sigma}\mid \sigma\in S\}$ is also a recursive perfect tree. By the property of $n_0$, $T_1\subseteq T$.
 \end{proof}

To construct a model satisfying $\MPT$, we have to relativize Lemma \ref{lemma: recursive pointed tree} accordingly.

Given a tree $T\subseteq \omega^{<\omega}$, a real $z$ and an index $i$, let $\HK_i(z,T)$ be a $z\oplus T$-recursive tree so that 
\[[\HK_i(z,T)]=\left\{f\oplus g \scmid 
\begin{array}{c}
g \in [T] \,\wedge\, \forall n\, (f(n)\mbox{ is the least }m\\\mbox{ with }\Phi_i^{g\upharpoonright m}(n)[m] \downarrow   \wedge \Phi_i^{g}(n)=z(n))
\end{array}
\right\}.\]
The idea of $\HK_i(z,T)$ originates from Harrington and Kechris~\cite{HK75}.

 Obviously $\HK_i(z, T)\leqT T \oplus z$. Note that if $\Phi_i^g$ is undefined or different from $z$ then no path of the form $f\oplus g$ can be contained in $\HK_i(z,T)$.  As a consequence, if $z\geqT T$, then it holds for every $f\oplus g\in [\HK_i(z,T)]$ that $g\in [T]$ and $g\geqT z \geqT z\oplus T\geqT \HK_i(z, T)$.

\begin{theorem}\label{theorem: over rca mpt not imply wkl}
Over $\RCA$, $\MPT$ does not imply $\WKL$.
\end{theorem}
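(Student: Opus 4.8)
The goal is an $\omega$-model $\mathcal{M}$ with $M \subseteq 2^\omega$ (coded appropriately for subsets of $\omega^{<\omega}$) satisfying $\RCA$ and $\MPT$ but not $\WKL$. To fail $\WKL$ we want every set in $M$ to be of hyperimmune-free, indeed $\PA$-avoiding, character; the classical way to refute $\WKL$ in an $\omega$-model is to arrange that $M$ contains no set of $\PA$ degree, or more simply that $M$ consists entirely of sets computable from a single real that does not compute a member of some fixed $\Pi^0_1$ class with no recursive member. So first I would fix a hyperimmune-free real $g$ with $g \gg \emptyset$, as provided by the Hyperimmune-Free Basis Theorem together with the $\Pi^0_1$ class witnessing $x \gg \emptyset$ mentioned in the preliminaries, and additionally demand that $g$ be sufficiently generic — here is where the remark in the introduction that ``no generic real can be hyperimmune-free'' bites, so one cannot literally take $g$ arithmetically generic; instead one takes $g$ to be, say, weakly $n$-generic for each $n$ relative to a forcing whose dense sets are recursively bounded, or one replaces genericity by randomness as the authors suggest. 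I would define $M = \{ x : x \leqT g \}$, which is automatically a Turing ideal (closed under $\oplus$ and $\leqT$), hence an $\omega$-model of $\RCA$.

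**Checking $\WKL$ fails.** Since $g$ is hyperimmune-free, every $x \in M$ satisfies $x \leqtt g$, and we choose $g$ so that $g$ does not compute any member of the standard $\Pi^0_1$ class of $\{0,1\}$-valued DNC functions with no recursive member — this is consistent with $g \gg \emptyset$ provided we are careful, or we simply pick a different anchor real; in any case a hyperimmune-free real that computes no completion of $\PA$ exists, and then $M$ contains an infinite binary tree (a recursive one, the DNC tree) with no path in $M$, so $\WKL$ fails in $\mathcal{M}$.

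**Checking $\MPT$ holds — the main obstacle.** This is the heart of the argument and where the machinery of $\HK_i(z,T)$ and Lemmas~\ref{lemma: recursive pointed tree} and~\ref{lemma: technique to construct perfect} is used. Suppose $T \in M$ is a tree with $[T]$ cofinal in the Turing degrees of $M$; we must produce a pointed subtree $S \in M$. Relativizing the proof of Lemma~\ref{lemma: recursive pointed tree} to an oracle $z \in M$ with $z \geqT T$: cofinality of $[T]$ gives some path $x \in [T]$ with $x \geqT z$, say $x = \Phi_i^x{}'$... more precisely, since $[T]$ is cofinal and $z \in M = \{x \le_T g\}$, there is $y \in [T]$ with $y \geqT z$; writing $y$ as computed from $g$ and feeding the right indices into the construction, one builds $\HK_i(z,T)$ for the appropriate $i$, which by the remark before Theorem~\ref{theorem: over rca mpt not imply wkl} has the property that every $f \oplus h \in [\HK_i(z,T)]$ satisfies $h \in [T]$ and $h \geqT \HK_i(z,T)$. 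The genericity (or randomness) of $g$ is then invoked exactly as in Lemma~\ref{lemma: recursive pointed tree}: the relevant dense sets $D_0, D_1$ are now arithmetic in $z$, and since $g$ meets them generically over $z$, one extracts a perfect subtree $S_0$ of (a shift of) $\HK_i(z,T)$ whose projection is perfect; Lemma~\ref{lemma: technique to construct perfect}, applied to $\HK_i(z,T)$ — which satisfies the single-valuedness hypothesis because $f$ is determined by $h$ via $\Phi_i$ — yields a perfect tree $S \leqT \HK_i(z,T) \leqT z \oplus T$ all of whose paths $h$ lift to paths of $\HK_i(z,T)$, hence $h \in [T]$ and $h \geqT S$. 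Thus $S$ is a pointed subtree of $T$, and $S \in M$ since $S \leqT z \oplus T \leqT g$. The delicate point, and the main obstacle, is reconciling the genericity needed for Lemma~\ref{lemma: recursive pointed tree}'s argument with the hyperimmune-freeness needed to kill $\WKL$: one must check that the forcing used to build $S$ from $g$ only requires $g$ to meet dense sets that are ``tame'' enough (recursively bounded, or handled by randomness), so that a hyperimmune-free $g \gg \emptyset$ can be chosen meeting all of them simultaneously across all countably many $(z,i,T)$ — a fusion/bookkeeping argument over $\omega$-many requirements, exactly the kind of construction the authors flag as replaceable by randomness.
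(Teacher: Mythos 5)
Your model choice is the fatal problem. You set $M = \{x : x \leqT g\}$ for a single anchor real $g$, but no principal Turing ideal can satisfy $\MPT$ (or even $\wMPT$) unless $g$ is recursive. Consider the tree $T = \{g \uh n : n \in \omega\}$ of initial segments of $g$: it is $g$-recursive, so $T \in M$; its only path is $g$, so $[T]$ is cofinal in $M$; but $T$ has no perfect subtree whatsoever. This is precisely the phenomenon the paper itself flags in the remark after Proposition~\ref{proposition: mpt implies atr}, where the all-recursive model is used to show $\utilde{\mathbf{\Sigma}}^0_3$-$\mathrm{TD}$ does not imply $\wMPT$. The moral is that any model of $\MPT$ must have no maximum Turing degree, which is exactly why the paper builds $M = \{x : \exists n\,(x \leqT \oplus_{i\leq n} g_i)\}$ as an unbounded ascending union over a sequence $g_0, g_1, \ldots$ of mutually arithmetically generic reals. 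In that model, a cofinal tree $T$ computable from $\oplus_{i\leq n} g_i$ always has a path $x$ sitting strictly above $\oplus_{i\leq n} g_i$ but below some $\oplus_{i\leq m+n} g_i$, and the ``extra'' reals $\oplus_{n<i\leq m+n} g_i$ provide the genericity over $\oplus_{i\leq n} g_i$ needed to run the relativized Lemma~\ref{lemma: recursive pointed tree}. In your single-real model there is no room for such a gap: the relevant path is $\equivT g$ and there is nothing generic left over.

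There are secondary problems as well. You ask for $g$ hyperimmune-free with $g \gg \emptyset$ and simultaneously want $g$ not to compute a completion of $\PA$; but $g \gg \emptyset$ \emph{is} the statement that $g$ computes a $\{0,1\}$-valued DNC function, which is equivalent to having PA degree, so these two demands directly contradict one another. Your instinct that genericity and hyperimmune-freeness conflict is correct, but the resolution in the paper is that hyperimmune-freeness is simply not needed for this theorem: the failure of $\WKL$ follows because the model contains no PA-complete set (arithmetically generic reals, and joins thereof, do not compute PA completions), and the verification of $\MPT$ uses only genericity. The hyperimmune-free machinery (and $\gg$) belongs to the proof of Theorem~\ref{theorem: wmpt not aca}, a different result, where randomness replaces genericity for exactly the reason you mention. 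Your proposal conflates the two proofs.
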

\begin{proof}
Choose a sequence $g_0, g_1, g_2,\dots$ of elements of $ 2^{\omega}$ such that $g_0$ is arithmetically generic and such that for all $n$, $g_{n+1}$ is arithmetically generic relative to $g_0\oplus g_1\oplus g_2\cdots \oplus g_n$.
Let  $\mathcal{M}=(\omega, M,\dots)$, where $M=\{x\mid \exists n (x\leqT \oplus _{i\leq n}g_i)\}$.  
Obviously $\mathcal{M}\not\models \WKL$, as $\WKL$ would guarantee the existence of PA-complete sets, but no such sets can exist in  $\mathcal{M}$.

Now assume we are given a tree $T$ that is cofinal in $M$.   Then there must be some $x\in [T]$ and some $n$ so that $T \leqT g_0\oplus g_1\oplus g_2\cdots \oplus g_n\leT x$. Let $j$ be an index of the second reduction, that is, $\Phi_j^x=g_0\oplus g_1\oplus g_2\cdots \oplus g_n$. On the other hand there must exist some $m>0$ so that $x\leqT \oplus _{i\leq m+n}g_i$. Note that $\oplus _{n<i\leq m+n}g_i$ is $\oplus_{i\leq n}g_i$-arithmetically generic. Then $\HK_j(\oplus _{i\leq n}g_i, T)$ is an  $\oplus _{i\leq n}g_i$-recursive tree such that

\begin{enumerate}
\item  for every $f\oplus g\in [\HK_j(\oplus _{i\leq n}g_i,T)]$, $g\in [T]$ and $g\geqT \oplus _{i\leq n}g_i\oplus T\geqT \HK_j(\oplus _{i\leq n}g_i, T)$; and 
\item  there is an $f\oplus g\in [\HK_j(\oplus _{i\leq n}g_i,T)]$, for example $f\oplus x$ for some $f$, so that $\oplus _{i\leq m+n}g_i\geqT  f\oplus g\geT \oplus _{i\leq n}g_i \geqT \HK_j(\oplus _{i\leq n}g_i, T)$.
\end{enumerate}

Note that $\oplus _{n<i\leq m+n}g_i$ is $[\HK_j(\oplus _{i\leq n}g_i,T)]$-arithmetically generic. Relativizing  Lemma \ref{lemma: recursive pointed tree} to $\oplus _{i\leq n}g_i$, we apply it to $ [\HK_j(\oplus _{i\leq n}g_i,T)]$ to obtain a  $\oplus _{i\leq n}g_i$-recursive perfect tree  $S\subseteq \HK_j(\oplus _{i\leq n}g_i,T)$. By the property (1) of $\HK(\oplus _{i\leq n}g_i, T)$, $S$ is pointed.

Note that it follows directly from the definition of $\HK_{(.)}(.,.)$ that if $f_0\oplus g \in [\HK_j(\oplus _{i\leq n}g_i, T)]$ and $f_1\oplus g \in [\HK_j(\oplus _{i\leq n}g_i, T)]$ then $f_0=f_1$. In particular this is true for 
$f_0\oplus g$ and $f_1\oplus g \in S \subseteq \HK_j(\oplus _{i\leq n}g_i,T)$. Since $S$ is a perfect tree, by Lemma \ref{lemma: technique to construct perfect}, there is an $S$-recursive perfect tree $T_1$ so that for every $g\in [T_1]$, there is some $f$ so that $f\oplus g\in [S]$. Note that $T_1\subseteq T$. Moreover, by property (1),   for every $g\in [T_1]$, $g\geqT S\geqT T_1$. So $T_1$ is a pointed subtree of $T$.
\end{proof}

We prove that $\ACA$ does not imply $\wMPT$.

\begin{lemma}\label{lemma: union singletons}
There is a recursive tree $T$ so that  $[T]$ is countable and cofinal for the arithmetical reals, that is, for every arithmetical real $x$ there exists some $y \in [T]$ with~$y \geqT x$.\end{lemma}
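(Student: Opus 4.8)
The plan is to reduce the statement to the construction, uniformly in $n$, of a \emph{rigid} recursive tree $T_n\subseteq\omega^{<\omega}$, that is, one for which $[T_n]=\{y_n\}$ is a singleton with $y_n\geqT\emptyset^{(n)}$. Granting this, I would take $T=\{\langle\rangle\}\cup\bigcup_n\{\langle n\rangle{}^\frown\sigma:\sigma\in T_n\}$. By the uniformity of the construction $T$ is recursive, its infinite paths are exactly the reals $\langle n\rangle{}^\frown y_n$ for $n\in\omega$, so $[T]$ is countable, and since every arithmetical real $x$ satisfies $x\leqT\emptyset^{(n)}$ for some $n$ we get $x\leqT\emptyset^{(n)}\leqT y_n\leqT\langle n\rangle{}^\frown y_n\in[T]$, so $[T]$ is cofinal for the arithmetical reals. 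Thus everything reduces to realizing a real Turing-equivalent to $\emptyset^{(n)}$ as the unique infinite path of a recursive tree (equivalently, as a $\Pi^0_1$ singleton in $\omega^\omega$).

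For $n=1$ I would do this explicitly. Fix the standard stagewise enumeration $(\emptyset'_s)_s$ of $\emptyset'$, let $s(m)$ be the least stage by which $\emptyset'_s\uh m$ has settled, and for each $m$ with $s(m)>0$ let $c(m)$ be the least $j<m$ that enters $\emptyset'$ at stage exactly $s(m)$ (such a $j$ exists, and we set $c(m)=0$ when $s(m)=0$). Put $y_1(m)=\langle s(m),c(m)\rangle$. Let $T_1$ be the set of $\sigma\in\omega^{<\omega}$ such that for every $m<|\sigma|$, writing $\sigma(m)=\langle a,b\rangle$: (i) $b$ is the canonical certificate attached to $a$ at level $m$, meaning either $a=0=b$, or else $a>0$, some $j<m$ enters $\emptyset'$ at stage exactly $a$, and $b$ is the least such $j$; and (ii) no $j<m$ enters $\emptyset'$ at a stage in the interval $(a,|\sigma|]$. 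Both clauses are decidable and prefix-closed, so $T_1$ is a recursive tree. Clause (ii) forces, in the limit, that $a$ is a true stage at level $m$, hence $a\ge s(m)$; clause (i) then requires some $j<m$ to enter $\emptyset'$ at stage exactly $a$, which is impossible once $a>s(m)$ because $s(m)$ is already a true stage; and $a<s(m)$ contradicts (ii). So $a=s(m)$ is forced, whence $b=c(m)$ is forced, and $[T_1]=\{y_1\}$. Finally $y_1\equivT\emptyset'$: from $y_1$ one reads off $s$, and then $m\in\emptyset'$ iff $m\in\emptyset'_{s(m+1)}$, while conversely $\emptyset'$ computes both $s$ and $c$.

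For general $n$ I would iterate this: a path $y_n$ codes a tower $w_0=\emptyset,w_1,\dots,w_n$ of reals together with all the attached certificates, where at each level the clauses defining $w_{i+1}$ and its certificates relative to the oracle $w_i$ have the same decidable, prefix-closed form as above. The point is that the oracle $w_i$ used to approximate its jump $w_{i+1}$ is itself a coded component of the candidate path, so each relevant oracle computation queries only finitely many coordinates of that path; hence ``$w_{i+1}$ correctly approximates the jump of $w_i$'' remains a closed ($\Pi^0_1$) condition with no genuine oracle, while the least-stage and least-witness conventions at every level keep the path rigid. This produces, uniformly in $n$, a recursive tree $T_n$ with $[T_n]=\{y_n\}$ and $y_n\equivT\emptyset^{(n)}$, which finishes the proof.

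The step I expect to be the main obstacle is precisely this rigidification: one must approximate $\emptyset^{(n)}$ so that (a) the correctness of each coordinate is a $\Pi^0_1$ condition that can be imposed node by node inside a recursive tree, and (b) each coordinate is nevertheless uniquely determined, so that $[T_n]$ collapses to a single real rather than an uncountable family of over-approximations; and then one must verify that this survives iteration through the jump hierarchy uniformly in $n$. Both (a) and (b) are handled by carrying explicit least-stage and least-witness data inside the path; the remaining points --- closure under the jump, uniformity in $n$, and the bookkeeping for the disjoint union --- are routine.
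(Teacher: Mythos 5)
Your proof is correct and follows the same route as the paper: reduce the statement to a uniformly recursive sequence of trees $T_n$ whose unique infinite paths realize the degrees $\mathbf{0}^{(n)}$, then take the disjoint union $\bigcup_n n^\frown T_n$. The only difference is that the paper simply cites Sacks for the existence of such $\Pi^0_1$ singletons, while you supply the standard true-stage/certificate construction of them explicitly.
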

\begin{proof}
It is well known (see, for example, Sacks~\cite{Sacks90}) that there is a sequence of uniformly recursive trees $\{T_n\}_{n\in \omega}$ so that for every $n$, $[T_n]$ contains a unique real~$x_n$ of Turing degree $\mathbf{0}^{(n)}$. Let $T=\bigcup_{n\in \omega}n^{\smallfrown}T_n$.
\end{proof}

\begin{proposition}\label{theorem: aca does not imply wmpt}
$\ACA$ does not imply $\wMPT$.
\end{proposition}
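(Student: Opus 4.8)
The plan is to build an $\omega$-model $\mathcal{M}$ whose second-order part is closed under arithmetical comprehension (so $\mathcal{M} \models \ACA$) but which contains a cofinal tree with no perfect subtree (so $\mathcal{M} \not\models \wMPT$). The natural choice of model is the collection of arithmetically-definable-in-a-parameter sets: fix $M = \{ x : x \leqT \emptyset^{(n)} \text{ for some } n \in \omega \}$, the arithmetical reals. This $M$ is closed under $\oplus$, under $\leqT$, and under the Turing jump, hence $(\omega, M) \models \ACA$. The witness that $\wMPT$ fails in $\mathcal{M}$ will be the recursive tree $T = \bigcup_{n} n^{\smallfrown} T_n$ from Lemma~\ref{lemma: union singletons}: its set of paths is $\{ x_n : n \in \omega \}$ with $x_n \equivT \emptyset^{(n)}$, which is cofinal for the arithmetical reals, so $\mathcal{M}$ sees $[T]$ as cofinal in $M$.

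It remains to check that $T$ has no perfect subtree \emph{in $\mathcal{M}$} — in fact $T$ has no perfect subtree at all, which is more than enough. The key point is that $[T]$ is countable: any perfect subtree $S \subseteq T$ would have $[S] \subseteq [T]$, but $[S]$ is a nonempty perfect closed set and hence has size continuum, contradicting countability of $[T]$. So literally no $S$ (regardless of complexity) can be a perfect subtree of $T$, and in particular none exists in $M$. Together with cofinality of $[T]$ this gives $\mathcal{M} \models \neg\wMPT$, completing the argument.

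I expect the only point needing a word of care is the verification that $(\omega, M) \models \ACA$, i.e.\ that $M$ is genuinely closed under arithmetical comprehension relative to its own members; this is standard (arithmetical-in-$\emptyset^{(n)}$ sets are recursive in $\emptyset^{(m)}$ for some $m$), but should be stated. There is no real obstacle here: unlike the $\RCA$ and $\WKL$ constructions, which required genericity to kill perfect subtrees while keeping cofinality, here the countability of $[T]$ does all the work for free, and the separating model is just the minimal $\omega$-model of $\ACA$.
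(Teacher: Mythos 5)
Your proposal is correct and takes essentially the same route as the paper: the minimal $\omega$-model of $\ACA$ (the arithmetical reals), together with the recursive tree $T$ from Lemma~\ref{lemma: union singletons} whose countably many paths are cofinal for the arithmetical reals but which, being countable in paths, admits no perfect subtree. Your explicit countability argument for why no perfect subtree exists is the (implicit) content of the paper's sentence ``However, $T$ has no perfect subtree.''
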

\begin{proof}

Let  $\mathcal{M}=(\omega, M, \dots)$, where $M=\{x\mid \exists n\, (x\leqT \emptyset^{(n)})\}$.  Obviously, ${\mathcal{M}\models \ACA}$. 
  Let $T$ be as in Lemma \ref{lemma: union singletons}. Then $T$ is cofinal in $M$. However, $T$ has no perfect subtree. Hence $\mathcal{M}\not\models \wMPT$.
\end{proof}

By Corollary \ref{corollary: wept does not imply mpg}, over $\RCA$, $\wMPT$ does not imply $\MPT$.

\section{Over $\WKL$}

\begin{lemma}\label{lemma: hif pa computes}
Suppose that $T\subseteq 2^{<\omega}$ is a recursive tree and that $x\in [T]$ is a real so that there is some random real $y\leqtt x$. Then for every real $z\gg \emptyset$,  there must be some perfect tree $S\subseteq T$ so that $S\leqT z$. 
\end{lemma}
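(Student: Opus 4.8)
The plan is to use the random real $y \leqtt x$ to generate, below $z$, a perfect "tree of branching decisions" inside $T$. The key point is that incompressibility of $y$ guarantees that $T$ must branch densely along $x$ in a way that can be detected effectively, so that the search for branchings below $x$ can be replaced by a search bounded by a recursive function; then $z \gg \emptyset$ (via Proposition~\ref{theorem: total recursive}) lets us carry out that search $z$-recursively.

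First I would fix a total Turing functional $\Gamma$ with $\Gamma^x = y$ and a recursive constant $c$ witnessing randomness of $y$, so that $C(y\uh n) \geq \tfrac{n}{2} - c$ for all $n$. The crucial claim is a density-of-branching statement: there is a recursive function $b$ such that for every $\sigma \in T$ that is an initial segment of $x$ of length at least some threshold, there exist incomparable $\tau_0, \tau_1 \in T$ extending $\sigma$ with $|\tau_i| \leq |\sigma| + b(|\sigma|)$ and such that $\Gamma^{\tau_0}$ and $\Gamma^{\tau_1}$ already disagree below $|\Gamma^\sigma|$-many bits (here $\Gamma^\sigma$ denotes the finite string computed with oracle $\sigma$ using only queries answered by $\sigma$). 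Indeed, if along $x$ the tree $T$ had no branching above $\sigma$ within the $\Gamma$-use that decides the next $\Gamma$-output bit, or if all branchings agreed on that bit, then that bit of $y$ would be computable from $\sigma$ together with the (recursively bounded) data $\sigma$ and the program, making $y\uh n$ compressible to roughly $|\sigma|$ bits for infinitely many $n$ — contradicting $C(y\uh n)\geq n/2 - c$ once $n/2$ outgrows $|\sigma|$. The recursiveness of the bound $b$ comes from the fact that $T$ is recursive and $\Gamma$ is total, so the least branching level can be searched for; and because this search is recursively bounded, Proposition~\ref{theorem: total recursive} applied to $z \gg \emptyset$ yields a total $z$-recursive function extending it.

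Given the claim, I would build $S \leqT z$ by the usual recursive-perfect-tree construction: starting from a long enough initial segment of $x$, use the $z$-recursive branching function to pick, at each node already placed in $S$, two incomparable extensions in $T$ whose $\Gamma$-images diverge; place both in $S$ and recurse. This produces a perfect subtree $S \subseteq T$, and the whole construction is $z$-recursive since every step consults only $z$-recursive data. (If one wants, one can also arrange that $\Gamma$ maps $[S]$ injectively and apply Lemma~\ref{lemma: technique to construct perfect}, but the bare perfect subtree is all the statement asks for.)

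The main obstacle is making the density-of-branching claim precise and correctly quantified: one must ensure that the "witness of disagreement" for the two extensions is genuinely forced — i.e., that every path of $T$ through $\tau_0$ and every path through $\tau_1$ will keep $\Gamma$ disagreeing — rather than merely that $\Gamma^{\tau_0}$ and $\Gamma^{\tau_1}$ disagree on a bit that could later change. Since $\Gamma$ is total this is automatic (the value $\Gamma^{\tau_i}(k)$, once its use is contained in $|\tau_i|$, is permanent), so the real care is just in tracking uses and choosing $\tau_i$ long enough to contain the relevant use while staying within a recursive bound. The compression argument then has to be set up so that a failure of branching lets us describe $y\uh n$ by $\sigma$ plus $O(\log)$ bookkeeping, which for $|\sigma|$ small compared to $n$ beats the bound $n/2 - c$; choosing the thresholds so that this asymptotic inequality bites is the one genuinely fiddly calculation.
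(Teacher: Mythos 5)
Your overall strategy matches the paper's: use incompressibility of $y$ to force branching in $T$, invoke $z \gg \emptyset$ via Proposition~\ref{theorem: total recursive} to make the branching search effective in $z$, and recursively build $S$. However, there is a genuine gap in your density-of-branching claim. You establish it only for $\sigma$ that are initial segments of $x$, but the recursion that builds $S$ leaves $x$ after the very first branching: the incomparable extensions $\tau_0, \tau_1$ are not both on $x$ (typically neither is), and nothing in your argument guarantees branching above nodes of $T$ that do not lie on $x$. Your compression argument shows that a failure to branch \emph{along $x$} would make $y\uh n$ compressible; it says nothing about how $T$ behaves above off-$x$ nodes, because the random real $y$ is tied to the particular path $x$.

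The paper repairs this by passing to an intermediate recursive subtree $T_1 \subseteq T$: it keeps only those strings of $T$ along which the $\Phi_e$-approximations to the output still have Kolmogorov complexity at least $\frac{n}{2}-c_0$. This is a $\Pi^0_1$ constraint on all paths that can be presented as a recursive tree, and $x \in [T_1]$. Since \emph{every} path of $T_1$ --- not just $x$ --- yields complex $\Phi_e$-output, the branching claim can be proven for every extendible node of $T_1$ past a threshold $n_0$, so the recursion is sound. This intermediate tree is the key idea you are missing. You should also make extendibility explicit: the branching witnesses $\tau_0,\tau_1$ must themselves extend to infinite paths, since a dead-end branching breaks the recursion, and it is again $T_1$'s definition that lets the paper prove there are at least two \emph{extendible} extensions at each step. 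Finally, note that $z$ cannot in general locate an initial segment of $x$; the fix is again $T_1$: since it is a nonempty recursive binary tree and $z \gg \emptyset$, $z$ computes a member of $[T_1]$ from which to start the construction.
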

\begin{proof}
Suppose that $\Phi_e$ is a $tt$-reduction so that $\Phi_e^x=y$. Let $c_0$ be such that for all~$n$ we have $C(y\uh n)\geq \frac{n}{2}+c_0$ and let $f$ be a recursive, increasing function such that for all $n$ it holds that $\Phi_e^{x\upharpoonright f(n)}\uh n [f(n)]=y\uh n$. Without loss of generality we assume that $f(n)>2^n$ for all $n$.

Let $T_1\subseteq T$ be a recursive tree so that $$[T_1]=\{z\in 2^{\omega}\mid z\in [T]\wedge \forall n\,(C(\Phi_e^{z\upharpoonright f(n)}\uh n [f(n)])\geq \frac{n}{2}-c_0)\}.$$  Note that $x\in [T_1]$.

Let $g$ be a recursive function so that $g(0)=0$ and for every $n$, $g(n+1)=f(f(g(n)))$. 

\smallskip

\noindent {\em Claim.} There is an  $n_0$ so that  for all $n> n_0$ and every $\sigma \in T_1\cap 2^{g(n)}$ with $[\sigma]\cap [T_1]\neq\emptyset$,  there are two different $\sigma_0,\sigma_1\in 2^{g(n+1)}\cap T_1$ extending $\sigma$  so that $[\sigma_0]\cap [T_1]\neq \emptyset$ and  $[\sigma_1]\cap [T_1]\neq \emptyset$. 

\smallskip

\noindent {\em Subproof.} If not, then for every $m$ there is an $n\geq m$ and some $\sigma_0\in 2^{g(n)}\cap T_1$ with $[\sigma_0]\cap [T_1]\neq\emptyset$,   and a unique  string $\sigma$ in $g(n+1)$ extending $\sigma_0$ such that $[\sigma]\cap [T_1]\neq \emptyset$.  Then we build a Turing machine $M$ as follows: A pair $(\nu_1,\tau)$ is enumerated into $M$ 
if and only if 
\begin{enumerate}
\item $\nu_1\in 2^{g(n)}\cap T_1$ and $|\tau|=|f(g(n))|$ for some $n$; and
\item  there is a unique  string $\nu_2 \in 2^{g(n+1)}\cap T_1$ extending $\nu_1$ such that  for every $\nu_3\in 2^{g(n+1)}\cap T_1$ extending $\nu_1$ with $\nu_3 \not= \nu_2$ we have $[\nu_3]\cap [T_1]=\emptyset$; and
\item  for the $\nu_2$ above we have $\tau=\Phi_e^{\nu_2\upharpoonright g(n+1)}[g(n+1)]$.
\end{enumerate}

Recall that $f(n)>2^n$ for every $n$. So if $M(\sigma)=\tau$, then $C_M(\tau)\leq \log |\tau| $. Hence $C(\tau)\leq \log |\tau|+c_M$ for some constant $c_M$.

Then for every $m$, there is an $n>m$ and some $z\in [T_1]$ such that \[C(\Phi_e^{z\uh g(n+1)}\uh f(g(n)))\leq \log f(g(n))+c_M.\] This contradicts the choice of $T_1$. \hfill $\Diamond$

\smallskip

Now define a partial recursive function $\varphi\colon 2^{<\omega}\times \omega \rightarrow 2^{<\omega}$ as follows: For every $n$,  $\sigma\in 2^{g(n)}$ and $m\leq 2^{g(n+1)}$ let $\varphi(\sigma,m)$ be  the $m$-th finite string $\tau$  in $2^{g(n+1)}$ for which we detect that $[\tau]\cap [T_1]=\emptyset$. By Proposition \ref{theorem: total recursive}, there is a total $z$-recursive function $f$ extending $\varphi$.

For every $\sigma \in 2^{g(n)}$, let $$S_{\sigma}=\{\tau\succ \sigma\mid \tau \in 2^{g(n+1)}\}\setminus \{f(\sigma,m)\mid m\leq 2^{g(n+1)-g(n)}-2\}.$$

Then the sequence $\{S_{\sigma}\}_{\sigma\in 2^{<\omega}}$ is $z$-recursive.

By the Claim,   for every $n> n_0$ and $\sigma \in T_1\cap 2^{g(n)}$ with $[\sigma]\cap [T_1]\neq\emptyset$,
\begin{enumerate}
\item $|S_{\sigma}|\geq 2$; and
\item if  $\tau\in S_{\sigma}$, then $[\tau]\cap [T_1]\neq\emptyset$; and
\item any two different strings in $S_{\sigma}$ are incompatible. 
\end{enumerate}

Now, using these facts, it is easy to $z$-recursively construct a perfect tree
\[S\subseteq T_1\subseteq T.\qedhere\]
\end{proof}

\begin{theorem}\label{theorem: wmpt not aca}
Over $\WKL$, $\wMPT$ does not imply $\ACA$.
\end{theorem}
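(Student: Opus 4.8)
The plan is to build an $\omega$-model $\mathcal{M}=(\omega,M,\dots)$ satisfying $\WKL$ and $\wMPT$ but not $\ACA$, whose second-order part is generated by a carefully chosen sequence of hyperimmune-free reals. First I would construct, by iterating the Hyperimmune-Free Basis Theorem, a $\leqT$-increasing sequence $g_0\leqT g_1\leqT g_2\leqT\cdots$ of hyperimmune-free reals with $g_{n+1}\gg g_n$ for all $n$. For $g_0$ take the real $x\gg\emptyset$ from the discussion following Proposition~\ref{theorem: total recursive}. For the successor step, relativize to $g_n$: there is a nonempty $\Pi^0_1(g_n)$ class all of whose members are $\gg g_n$, so by the Hyperimmune-Free Basis Theorem relative to $g_n$ there is a member $x$ that is hyperimmune-free relative to $g_n$; put $g_{n+1}=g_n\oplus x$, which then satisfies $g_{n+1}\gg g_n$ and is (absolutely) hyperimmune-free, since $x$ is hyperimmune-free over $g_n$ and $g_n$ is hyperimmune-free. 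Finally set $M=\{x:\exists n\,(x\leqT g_n)\}$; this is a Turing ideal, so $\mathcal{M}\models\RCA$.

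Two of the three requirements are then immediate. Every element of $M$ is Turing-below some $g_n$ and is therefore hyperimmune-free; since $\emptyset'$ has hyperimmune degree, $\emptyset'\notin M$, so $\mathcal{M}\not\models\ACA$. And if $S\in M$ is an infinite binary tree, say $S\leqT g_n$, then $g_{n+1}\gg g_n$ together with the remark following Proposition~\ref{theorem: total recursive} (relativized to $g_n$) produces a path through $S$ that is $\leqT g_{n+1}\in M$; hence $\mathcal{M}\models\WKL$.

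The substance is showing $\mathcal{M}\models\wMPT$, and this is where Lemma~\ref{lemma: hif pa computes} and randomness enter. Let $T\in M$ be a tree with $[T]$ cofinal in $M$, and fix $n$ with $T\leqT g_n$. Applying cofinality to the element $g_{n+1}\in M$ gives some $y\in[T]\cap M$ with $y\geqT g_{n+1}$; note $y$ is hyperimmune-free, so, viewed as a function, $y$ is dominated by a recursive $h$ with $y(k)<h(k)$ for all $k$. Then $T'=\{\sigma\in T:\forall k<|\sigma|\ \sigma(k)<h(k)\}$ is a $g_n$-recursive, finitely branching subtree of $T$ with $y\in[T']$, and by coding the level-$k$ coordinates (bounded by $h(k)$) into binary blocks one obtains a $g_n$-recursive tree $\hat T\subseteq 2^{<\omega}$ together with $g_n$-recursive maps witnessing a homeomorphism $[\hat T]\cong[T']$; let $\hat y\in[\hat T]$ correspond to $y$, so that $\hat y\equivtt y$, whence $\hat y\geqT g_{n+1}$ and $\hat y$ is hyperimmune-free. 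Since $g_{n+1}\gg g_n$, $g_{n+1}$ computes a $g_n$-random real $w$, so $\hat y$ does too; and because $\hat y$ is hyperimmune-free with $g_n\leqT\hat y$, the reduction computing $w$ from $\hat y$ has use bounded by a $g_n$-recursive function. This is exactly the hypothesis of Lemma~\ref{lemma: hif pa computes} relativized to $g_n$, applied to $\hat T$, $\hat y$, $w$ and $z=g_{n+1}\gg g_n$: it yields a perfect tree $\hat S\subseteq\hat T$ with $\hat S\leqT g_{n+1}$. Transporting $\hat S$ back through the coding homeomorphism produces a perfect tree $S\subseteq T'\subseteq T$ with $S\leqT g_{n+1}\in M$. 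Hence $T$ has a perfect subtree in $M$, and $\mathcal{M}\models\wMPT$.

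I expect the main obstacle to be the step bridging the gap between the statement of $\wMPT$, which concerns arbitrary trees $T\subseteq\omega^{<\omega}$, and Lemma~\ref{lemma: hif pa computes}, which is stated only for binary trees: the passage forces the witnessing path $y\in[T]$ to be chosen simultaneously hyperimmune-free (so that $T$ prunes to a recursively bounded, binary-codable subtree) and high enough to compute, with recursively bounded use, a random real (so that the lemma's hypothesis survives the coding). It is precisely the conjunction of cofinality with the two structural features built into $M$ --- that $g_{n+1}\gg g_n$ and that each $g_n$ is hyperimmune-free --- that makes such a $y$ available, which is also why genericity cannot replace randomness here. A secondary point needing care is verifying that Lemma~\ref{lemma: hif pa computes} relativizes verbatim to an arbitrary oracle and that the binary coding transports perfect subtrees faithfully in both directions.
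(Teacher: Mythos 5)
Your argument is correct and follows the paper's strategy exactly: you build the same $\omega$-model from an increasing $\ll$-chain of hyperimmune-free reals, dispose of $\neg\ACA$ and of $\WKL$ in the same way, and then discharge $\wMPT$ by applying the relativized Lemma~\ref{lemma: hif pa computes} to a binary tree together with a random real computed from the next element of the chain. The only genuine divergence is the step you yourself flag, the passage from $T\subseteq\omega^{<\omega}$ to a binary tree. You prune $T$ to a recursively bounded subtree $T'$ (using that the hyperimmune-free witness $y$ is dominated by a recursive $h$) and then code the bounded branching into binary blocks. The paper instead takes the unary encoding $T_1$ of all of $T$, with $\sigma\in T_1$ iff $\sigma$ has the form $0^{\tau(0)}10^{\tau(1)}\cdots 0^{\tau(|\tau|-1)}10^{j}$ for some $\tau\in T$; it observes that the extra paths this creates are all eventually zero, hence of degree $\mathbf{0}$, so $T_1$ is still cofinal and a perfect subtree of $T_1$ transfers back to a perfect subtree of $T$. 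Both codings work; yours makes the role of hyperimmune-freeness in the coding explicit, while the paper's avoids having to fix the witnessing path $y$ before coding. A second, cosmetic difference is that the paper obtains the needed random from $\WKL$ inside $M$ (applied to a $\Pi^0_1(T_1)$ class of $T_1$-random reals) rather than directly from $g_{n+1}\gg g_n$ as you do; given how $M$ is built these are the same thing. Your inductive verification that each $g_{n+1}=g_n\oplus x$ is absolutely hyperimmune-free is correct and matches what the paper leaves implicit.
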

\begin{proof}
Let $x_0=\emptyset \ll x_1 \ll x_2\ll\dots $ be a sequence of hyperimmune-free reals. We can see inductively that such a sequence exists, because we can at step $n$ build an $x_{n-1}$-recursive tree $P_n$ such that all of its paths $y$ have $y \gg x_{n-1}$; we can then apply the Hyperimmune-Free Basis Theorem relative to $x_{n-1}$ to get $x_n$.

Let $\mathcal{M}=(\omega,M,\dots)$ where $M=\{y\mid \exists n\,(y\leqT x_n)\}$.

Obviously, $\mathcal{M}\models \WKL$ but  $\mathcal{M}\not\models \ACA$. 

Let $T\subseteq \omega^{<\omega}$ be a  tree that is cofinal in $M$. Let $T_1\subseteq 2^{<\omega}$ so that $\sigma \in T_1$ if and only if there is a $\tau\in T$ so that $\sigma$ is of the form $0^{\tau(0)}10^{\tau(1)}\dots 0^{\tau(|\tau|)}1 0\dots 0$. Then for every $\{\mathbf{x}\mid x\in [T_1]\setminus [T]\}=\mathbf{0}$ and $\{\mathbf{x}\mid x\in [T]\}\setminus \{\mathbf{x}\mid x\in [T_1]\}=\emptyset$.  So $T_1$ is  also  cofinal in $M$. $\WKL$ implies that a $T_1$-random real $y$ exists; since $T_1$ is cofinal, there must be some $x\in [T_1]$ with $x \geqT y$ in $M$. Since $x$ is hyperimmune-free, in fact $y\leqtt x$.  Fix a number $n$ so that $x_n\gg  T_1$.  Applying Lemma \ref{lemma: hif pa computes} by relativizing it to $T_1$, there must be some $x_n$-recursive perfect tree $S_1\subseteq T_1$. Then it is easy to see that there must be some $x_n$-recursive perfect tree $S\subseteq T$. Thus $S\in M$. 
\end{proof}

Note that by Corollary \ref{corollary: wept does not imply mpg}, over $\WKL$, $\wMPT$ does not imply $\MPT$.
\begin{question}
Over $\WKL$, does $\MPT$ imply $\ACA$? 
\end{question}

\section{Over $\ACA$}

The following lemma is easy.
\begin{lemma}\label{lemma: pointed implies upper cone}
Over $\RCA$, if $T$ is a pointed tree, then for every real $x\geqT T$, there is a $y\in [T]$ so that $x\equivT y$.
\end{lemma}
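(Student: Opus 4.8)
The plan is to unwind the definition of a pointed tree and then extract the desired path by a standard coding/recursion argument inside $\RCA$. Recall that $T$ being pointed means $T$ is perfect and for every $y \in [T]$ we have $T \leqT y$. Given a real $x \geqT T$, I want to produce $y \in [T]$ with $y \equivT x$. The natural idea is to use $x$ to navigate through the perfect tree $T$, at each branching node using the bits of $x$ to decide which way to go, so that $x$ is coded into $y$; this immediately gives $x \leqT y$ (decode the branching choices, which can be done recursively in $y$ together with $T$, and $T \leqT y$ since $T$ is pointed). Conversely $y \leqT x$ because the whole navigation is carried out $x$-recursively, using $T \leqT x$.

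First I would, working in $\RCA$, fix an $x \oplus T$-recursive enumeration of the "branching nodes" of $T$: since $T$ is perfect, above every node of $T$ there are two incomparable nodes of $T$, so one can $T$-recursively define a function assigning to each finite binary string $s$ a node $\nu_s \in T$ such that $\nu_{s0}$ and $\nu_{s1}$ are incomparable extensions of $\nu_s$ in $T$ and $\nu_\emptyset = \emptyset$ (this is the usual embedding of $2^{<\omega}$ into a perfect tree, and the search involved is primitive recursive in $T$, so it is available in $\RCA$). Then set $y = \bigcup_n \nu_{x \uh n}$, which exists as a set by $\Delta^0_1$ comprehension relative to $x \oplus T$, and $y \in [T]$ since each $\nu_{x \uh n} \in T$ and they form a chain. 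Clearly $y \leqT x \oplus T \leqT x$.

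Next I would verify $x \leqT y$. Given $y$ and $T$, one can recursively recover the sequence $\nu_{x\uh 0} \prec \nu_{x\uh 1} \prec \cdots$ as the initial segments of $y$ lying in $T$ at the designated branching levels, and at each step compare with $\nu_{(x\uh n)0}$ versus $\nu_{(x\uh n)1}$ to read off $x(n)$; since $T$ is pointed we have $T \leqT y$, so this whole decoding is $y$-recursive, giving $x \leqT y$. Combining, $x \equivT y$, and formalizing "$x \equivT y$" and these Turing reductions is routine in $\RCA$ (all the reductions are given by explicit total functionals with recursively bounded use once $T$ is in hand).

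I do not expect a serious obstacle here; the only mild care needed is to make sure the construction of the embedding $s \mapsto \nu_s$ and the set $y$ is legitimately carried out in $\RCA$ (i.e. using only $\Delta^0_1$-comprehension and the fact that $T, x$ are available), and to phrase the Turing-equivalence claim correctly given that in an $\omega$-model "$T$ pointed" already delivers $T \leqT y$ uniformly. The slightly delicate point, if anything, is bookkeeping the uses so that the reduction $x \leqT y$ genuinely only queries $y$ (via $T \leqT y$) and not $T$ directly; but since $T$ is pointed this is immediate, so this lemma is indeed easy as claimed.
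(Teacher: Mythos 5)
Your proposal is correct and matches the paper's argument: the paper likewise chooses $y \in [T]$ by branching according to the bits of $x$, then cites the chain $x \geqT x\oplus T \geqT y \geqT y\oplus T \geqT x$, which is exactly the reasoning you spell out in more detail.
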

\begin{proof}
As $T$ is in particular perfect, we can choose a path $y$ such that $y$ encodes $x$ by branching in $T$ according to the bits of $x$. Then
\(x \geqT x\oplus T \geqT y \geqT y \oplus T \geqT x.\qedhere\)
\end{proof}

The following important theorem can be used to transfer results about $\Sigma^0_3$ sets to recursive trees.
\begin{theorem}[Harrington and Kechris \cite{HK75}]\label{theorem: hk translation theorem}
Over $\RCA$, for every $\Sigma^0_3$ class $A$, there is a recursive tree $T$ so that $\{\mathbf{x}\mid x\in [T]\}=\{\mathbf{x}\mid x\in A\}$. Moreover, the proof can be relativized.
\end{theorem}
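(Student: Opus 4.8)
The plan is to reduce the problem to a known fact about $\Sigma^0_3$ subsets of $\omega^\omega$ (or $2^\omega$): namely that such a class can be written as the projection of a $\Pi^0_2$ class, which in turn is the set of infinite paths of a recursive tree. So first I would recall or re-derive the representation $A = \{x : \exists y\,\forall n\,\exists m\,R(x\uh n, y\uh m)\}$ with $R$ recursive, and observe that the inner $\forall n\,\exists m$ part defines a $\Pi^0_2$ condition on the pair $(x,y)$. The standard way to turn a $\Pi^0_2$ class into the paths of a recursive tree is to use a witness function: pad $y$ so that each bit of $y$ records, in a monotone way, the stage by which the next existential quantifier has been verified along $x$. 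Concretely, one builds a recursive tree $T^* \subseteq (\omega \times \omega)^{<\omega}$ consisting of pairs $(\sigma, \rho)$ such that $\rho$ is consistent with being an honest "convergence log" for $\sigma$ against some potential $y$; then $[T^*]$ projects (in the first coordinate) exactly onto $A$.

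The key technical point is that this projection must preserve Turing degrees in the precise sense demanded by the theorem, i.e. $\{\mathbf{x} : x \in [T]\} = \{\mathbf{x} : x \in A\}$ where $T$ is an honest tree (a tree in $\omega^{<\omega}$, not in the product space). So after obtaining $T^*$ as above, I would apply the degree-preserving coding trick already used elsewhere in the paper: interleave the two coordinates of a path $(f,g) \in [T^*]$ into a single real $f \oplus g$, and let $T$ be the recursive tree whose paths are exactly these interleavings together with whatever padding makes $T$ downward closed. Then every path of $T$ has the same degree as some $x \in A$ (since $f \oplus g \equivT$ some $x \in A$ when the log $g$ is computable from $x$), and conversely every $x \in A$ has a witnessing log that is computable from $x$, so $x \oplus g \equivT x$ appears (up to degree) in $[T]$. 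The construction of the log $g$ from $x$ is exactly the point where one must be careful: $g$ should be chosen canonically — e.g. at position $n$ record the least $m$ such that the $n$-th existential has been verified using $x\uh{\text{(something)}}$ — so that $g \leqT x$, guaranteeing $x \oplus g \equivT x$.

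The main obstacle I expect is bookkeeping: making the recursive tree $T$ literally a subset of $\omega^{<\omega}$ (downward closed, decidable) while simultaneously (a) ensuring every branch has a degree occurring in $A$ and (b) ensuring every degree in $A$ is realized. Point (b) forces the tree to be "fat enough" that the canonical log for each $x\in A$ actually appears as an initial-segment-extendible branch; point (a) forces it to be "thin enough" that spurious branches (logs that lie, or that are not computable from their $x$) do not introduce new degrees. The resolution is the classical Harrington–Kechris device: only allow a branch to extend past stage $k$ if its log segment is confirmed by the recursive relation $R$ at that stage, and pad with a fixed symbol once confirmation stalls; confirmation is decidable, so $T$ is recursive, and a branch survives forever iff its log is genuinely honest, which happens for the canonical logs and only for honest ones. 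Finally, I would note that every step — the $\Sigma^0_3$ normal form, the log construction, the interleaving — relativizes verbatim to an arbitrary oracle, which gives the "moreover" clause, and that all of this is formalizable in $\RCA$ since the objects involved are recursive (relative to the given parameters) and the verifications are arithmetical.
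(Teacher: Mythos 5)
Your proposal addresses a theorem that the paper cites from Harrington and Kechris~\cite{HK75} without supplying its own proof, so there is no internal argument to compare against; the closest thing in the paper to a blueprint is the construction $\HK_i(z,T)$ introduced just before the theorem, which is attributed to the same source and implements exactly the ``canonical least-witness log'' device you describe. Your outline correctly identifies that mechanism: pair $x$ with the log $g$ of \emph{least} stages at which the relevant computations converge, so that $g\leqT x$ and hence $x\oplus g\equivT x$, while keeping the tree recursive by letting a node survive only as long as its log fragment passes a finitary confirmation test against the recursive matrix.

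One genuine imprecision deserves attention. You write $A=\{x:\exists y\,\forall n\,\exists m\,R(x\uh n,y\uh m)\}$ with $y$ ranging over reals and call the inner part $\Pi^0_2$; taken literally this is a $\Sigma^1_1$ definition, not a $\Sigma^0_3$ one. The correct starting point is the arithmetic normal form $\exists k_0\,\forall k_1\,\exists k_2\,R(x\uh k_2,k_0,k_1,k_2)$ with $R$ recursive, after which one Skolemizes the \emph{numerical} existentials: $y(0)$ records $k_0$ and $y(k_1+1)$ the least $k_2$ witnessing $k_1$. This makes the inner matrix $\Pi^0_1$ in $(x,y)$, forces $y$ to be canonical and hence $y\leqT x$, and shows the construction stays arithmetic rather than silently escalating to $\Sigma^1_1$. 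Concretely one writes $A=\bigcup_{k_0}A_{k_0}$ with the $A_{k_0}$ uniformly $\Pi^0_2$, builds for each $k_0$ a recursive tree $T_{k_0}$ whose paths are exactly the interleavings $x\oplus g_x$ of $x\in A_{k_0}$ with its canonical modulus $g_x$, and sets $T=\bigcup_{k_0}k_0^{\smallfrown}T_{k_0}$. You evidently intend exactly this (your discussion of the log being ``chosen canonically'' and computable from $x$ makes that plain), but the formula as written does not say it, and it is the one place where the ``bookkeeping'' you flag actually carries mathematical weight. With that correction the proposal is the standard Harrington--Kechris argument, and the relativization claim follows immediately since every ingredient is uniformly relative to a parameter.
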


Recall the definition of  {\em $\Gamma$-$\mathrm{TD}$} from page~\pageref{pageref_gamma_TD}.
 Montalb{\' a}n and Shore proved the following theorem by combining Theorem \ref{theorem: hk translation theorem} with a number of classical recursion theory results.
\begin{theorem}[Montalb{\' a}n and Shore \cite{MS15}]\label{theorem? MS 3 TD}
Over $\ACA$, $\utilde{\mathbf{\Sigma}}^0_3$-$\mathrm{TD}$ implies $\ATR$.
\end{theorem}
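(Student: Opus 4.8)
The plan is to argue the contrapositive, working with $\omega$-models as elsewhere in this paper (the upgrade to arbitrary models of $\ACA$ is a separate, more proof-theoretic matter that I set aside), and to reduce everything to recursive trees via the Harrington--Kechris translation theorem. First I would record the usual recursion-theoretic normal form of $\ATR$: over $\ACA$, $\ATR$ is equivalent to the assertion that for every set $X$ and every well-ordering $\prec$ the jump hierarchy over $X$ along $\prec$ — iterate the Turing jump at successor stages, take recursive joins at limits — exists as a set. This is well known (see Simpson~\cite{Simp09}), and it follows from the definition of $\ATR$ once one observes that finitely many jumps subsume any arithmetical operator. So suppose $\mathcal M=(\omega,M,\dots)\models\ACA$ but $\mathcal M\not\models\ATR$, and fix $X,\prec\in M$ for which the $\prec$-jump hierarchy over $X$ is not in $M$; write $H_a$ for the (genuine, hence unique) jump hierarchy over $X$ along the proper initial segment $\prec\uh a$, for $a$ in the field of $\prec$. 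By Theorem~\ref{theorem: hk translation theorem}, $\utilde{\mathbf{\Sigma}}^0_3$-$\mathrm{TD}$ is equivalent to its restriction to Turing closures of path sets of recursive trees, since a $\Sigma^0_3$ set and the matching recursive tree realise the same degrees; hence it suffices to build a recursive-in-$(X\oplus\prec)$ tree $T$ whose path set is cofinal in $M$ up to Turing degree but contains no upper cone of degrees realised in $M$.

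The tree I would use is assembled from the $H_a$: by the classical fact that each hyperarithmetic real is the unique path of a recursive tree (Sacks; compare Lemma~\ref{lemma: union singletons} and its proof), relativized to $X\oplus\prec$ and uniformized in $a$, one obtains a single recursive-in-$(X\oplus\prec)$ tree $T$ whose path set realises exactly the Turing degrees of the reals $H_a$, so that the Turing closure of $[T]$ is $A:=\{x:x\equivT H_a\text{ for some }a\}$. This $A$ is Turing-invariant by construction. It is cofinal in $M$: given $z\in M$, closure of $\ACA$ under the Turing jump makes $z$ Turing-below some finite jump of $z\oplus X\oplus\prec$, hence — reindexing that finite iteration as an initial segment of $\prec$ — Turing-below $H_a$ for a suitable $a$, and $H_a$, being hyperarithmetic in $z\oplus X\oplus\prec$, lies in $M$. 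And $A$ contains no upper cone of degrees realised in $M$: were the cone above some $d\in M$ contained in $A$, every $M$-degree above $d$ would be $\deg(H_a)$ for some $a$, which is impossible because below the next jump one can always adjoin an intermediate (Sacks-style) or a sufficiently generic degree to obtain an $M$-degree above $d$ that is not of this form. So $A$ (equivalently $T$) witnesses $\mathcal M\not\models\utilde{\mathbf{\Sigma}}^0_3$-$\mathrm{TD}$, completing the contrapositive.

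I expect the main obstacle to be carrying out the last two verifications uniformly over all $\mathcal M\models\ACA+\neg\ATR$. In the cofinality argument I tacitly used that $M$ is generated along $\prec$ under jump and join; in general one should first reduce to a suitably minimal counterexample $\omega$-model, or extract the same effect from $\ACA$-provable bounds on the sets in $M$. One must also produce the single tree $T$ uniformly in $a$, which forces the jump hierarchy over $X$ along initial segments of $\prec$ to be presented uniformly — the ``fine structure of the transfinite jump hierarchy'' ingredient. These two points, together with the pseudo-well-ordering and relativized basis-theorem machinery in the style of Gandy and Harrison, are exactly the ``classical recursion theory results'' that Montalb{\' a}n and Shore combine with Theorem~\ref{theorem: hk translation theorem}, and I would expect their interaction to be the delicate part of the proof.
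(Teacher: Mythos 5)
The paper does not prove this theorem: it is stated as a citation of Montalb\'an and Shore~\cite{MS15}, and the paper's use of it is confined to Proposition~\ref{proposition: mpt implies atr}, which runs in the opposite direction (from $\MPT$ to $\utilde{\mathbf{\Sigma}}^0_3$-$\mathrm{TD}$ to $\ATR$). So there is no in-paper argument to compare your sketch against, and it has to be judged on its own merits against the known shape of the Montalb\'an--Shore proof.

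Your outline captures the right high-level ideas (contrapositive over $\omega$-models, the normal form of $\ATR$ via jump hierarchies, a no-cone argument by inserting intermediate or generic degrees between levels), but several of the ``easy'' steps you sketch are in fact incorrect as stated, quite apart from the pseudo-well-ordering machinery you explicitly defer. First, the invocation of Theorem~\ref{theorem: hk translation theorem} is misplaced for this direction: to falsify $\utilde{\mathbf{\Sigma}}^0_3$-$\mathrm{TD}$ you need a set that is both $\Sigma^0_3$ \emph{and} Turing-invariant, and the Turing closure of the path set of a recursive tree is a priori only $\Sigma^1_1$. Harrington--Kechris lets you go from a $\Sigma^0_3$ set to a tree with the same degrees (used by the paper to derive $\utilde{\mathbf{\Sigma}}^0_3$-$\mathrm{TD}$ from $\MPT$); it does not supply $\Sigma^0_3$-definability of a Turing closure, which is what you would need. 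The set must be written as a $\Sigma^0_3$ Turing-invariant condition directly, e.g.\ ``$x$ computes a locally correct jump hierarchy along $\prec$,'' rather than as $\{x : x\equivT H_a\ \text{for some}\ a\}$.

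Second, the cofinality argument is broken: a jump hierarchy $H_a$ over a \emph{fixed} base $X$ need not lie above an arbitrary $z\in M$, and ``reindexing the finite jumps of $z\oplus X\oplus\prec$ as an initial segment of $\prec$'' does not make them segments of the hierarchy over $X$. You flag this yourself, but the proposed repairs (passing to a minimal counterexample model, or appealing to ``$\ACA$-provable bounds'') are not obviously available; what the actual argument needs is a set that relativizes to arbitrary bases. Third, you take for granted that each $H_a$ for $a$ in the field of $\prec$ exists in $M$ and is ``genuine, hence unique''; but if $\ATR$ fails along $\prec$ it may already fail along a proper initial segment, and if $\prec$ is a pseudo-well-ordering from $M$'s point of view, ``hierarchies'' along it are neither unique nor necessarily well-founded. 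That last ambiguity is precisely the content of the Gandy--Harrison machinery you label as ``the delicate part''; since all three of the concrete verifications in your sketch (definability, cofinality, and the structure of the $H_a$) depend on resolving it, the proposal as written is a plan rather than a proof.
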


Now we are ready to show that  $\MPT$ implies $\ATR$.
\begin{proposition}\label{proposition: mpt implies atr}
Over $\ACA$, $\MPT$ implies $\ATR$.
\end{proposition}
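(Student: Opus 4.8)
The plan is to reduce $\ATR$ to $\utilde{\mathbf{\Sigma}}^0_3$-$\mathrm{TD}$ and then invoke Theorem~\ref{theorem? MS 3 TD}. So the real content is: over $\ACA$, $\MPT$ implies $\utilde{\mathbf{\Sigma}}^0_3$-$\mathrm{TD}$. First I would take a $\utilde{\mathbf{\Sigma}}^0_3$ set $A$ that is Turing-invariant and cofinal, and I want to produce an upper cone of Turing degrees inside $A$. Apply (the relativized version of) the Harrington--Kechris translation theorem, Theorem~\ref{theorem: hk translation theorem}, to get a recursive (in the relevant parameter) tree $T$ with $\{\mathbf{x}\mid x\in[T]\}=\{\mathbf{x}\mid x\in A\}$. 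Since $A$ is cofinal and Turing-invariant, $[T]$ is cofinal as well, so $\MPT$ applies and yields a pointed subtree $T_1\subseteq T$.

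Now the key step: use Lemma~\ref{lemma: pointed implies upper cone}. Since $T_1$ is pointed, let $\db = \mathbf{T_1}$ (the Turing degree of $T_1$). For every real $x$ with $x\geqT T_1$, Lemma~\ref{lemma: pointed implies upper cone} gives $y\in[T_1]\subseteq[T]$ with $y\equivT x$. Hence $\mathbf{x}\in\{\mathbf{x}\mid x\in[T]\}=\{\mathbf{x}\mid x\in A\}$, i.e. $x\in A$ by Turing-invariance of $A$. Therefore $A$ contains the entire upper cone above $\db$. This establishes $\utilde{\mathbf{\Sigma}}^0_3$-$\mathrm{TD}$ over $\ACA$ (one has to be slightly careful that this reasoning, including the existence of the tree $T$ and the path $y$, goes through in the weaker second-order setting, but the cited theorems and Lemma~\ref{lemma: pointed implies upper cone} are already stated as being provable over $\ACA$ or $\RCA$). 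Then Theorem~\ref{theorem? MS 3 TD} immediately gives $\ATR$.

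The main obstacle, such as it is, is bookkeeping about parameters and relativization rather than any genuine mathematical difficulty: the translation theorem has to be applied relative to a real coding the $\utilde{\mathbf{\Sigma}}^0_3$ definition of $A$, and one must check that the tree $T$ it produces is recursive in that parameter so that $\MPT$ — which is stated for arbitrary trees $T\subseteq\omega^{<\omega}$ — can be applied inside the model. I would also want to double-check that ``$[T]$ cofinal'' really follows from ``$A$ cofinal and Turing-invariant'' together with the degree-equality $\{\mathbf{x}\mid x\in[T]\}=\{\mathbf{x}\mid x\in A\}$: given any real $z$, cofinality of $A$ gives some $a\in A$ with $a\geqT z$, and then the degree equality gives $y\in[T]$ with $y\equivT a\geqT z$, as needed. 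With those routine checks in place the proof is just the three-line chain: translate, apply $\MPT$, apply Lemma~\ref{lemma: pointed implies upper cone}, conclude $\utilde{\mathbf{\Sigma}}^0_3$-$\mathrm{TD}$, invoke Theorem~\ref{theorem? MS 3 TD}.
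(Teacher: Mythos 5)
Your proof is correct and takes essentially the same route as the paper: reduce to $\utilde{\mathbf{\Sigma}}^0_3$-$\mathrm{TD}$ via Theorem~\ref{theorem? MS 3 TD}, translate the given cofinal Turing-invariant $\Sigma^0_3$ set to a recursive tree with the same degree spectrum via Theorem~\ref{theorem: hk translation theorem}, apply $\MPT$ to obtain a pointed subtree, and invoke Lemma~\ref{lemma: pointed implies upper cone} to exhibit the upper cone. If anything your write-up is slightly more careful than the paper's, which phrases the base of the cone as $T$ rather than the pointed subtree $T_1$ to which the lemma actually applies.
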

\begin{proof}
By Theorem \ref{theorem? MS 3 TD}, it is sufficient to prove that $\MPT$ implies  $\utilde{\mathbf{\Sigma}}^0_3$-$\mathrm{TD}$. 

We only prove the lightface version. Given any  $\Sigma^0_3$, cofinal, and Turing-invariant set $A$, by Theorem \ref{theorem: hk translation theorem}, there is a recursive tree $T$ so that $\{\mathbf{x}\mid x\in [T]\}=\{\mathbf{x}\mid x\in A\}$. So $[T]$ is also cofinal. By $\MPT$, $[T]$ has a pointed subtree. By Lemma \ref{lemma: pointed implies upper cone},  for every real $x\geqT T$, there is a $y\in [T]$ so that $x\equivT y$.  So $A$  contains  an upper cone of Turing degrees.
\end{proof}
\begin{remark}
Note that, over $\RCA$, $\utilde{\mathbf{\Sigma}}^0_3$-$\mathrm{TD}$ does not imply $\wMPT$. This is because $\RCA$ has a model $\mathcal{M}$ that consists only of recursive reals. Then $\mathcal{M}$ satisfies $\utilde{\mathbf{\Sigma}}^0_3$-$\mathrm{TD}$ vacuously, but it contains a recursive tree $T$ consisting exactly of  one recursive path. Such a $T$ is then vacuously cofinal, but not perfect. So $\mathcal{M}$ does not satisfy~$\wMPT$.
\end{remark}

In the following theorem we will show that $\ATR$ implies $\MPT$. For this purpose, let $T\subseteq \omega^{<\omega}$ be any tree and define the $T$-recursive tree $\HK(T)$ as follows.
\[
[\HK(T)]=\left\{(i^{\smallfrown}f\oplus g) \oplus h\scmid  
\begin{array}{c}
g \in [T]  \,\wedge\,  h=(i^{\smallfrown}f\oplus g) * \Phi_i^g\,\wedge\,\\  
 \forall n\, (f(n)\mbox{ is the least }m\mbox{ with }\Phi_i^{g\upharpoonright m}(n)[m] \downarrow)
\end{array}
\right\}
\]
The idea for $\HK(T)$ is again taken from Harrington and Kechris~\cite{HK75}.

\begin{theorem}\label{theorem: atr implies mpt}
$\ATR$ implies $\MPT$.
\end{theorem}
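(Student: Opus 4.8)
The plan is to mirror Martin's original proof of Theorem~\ref{theorem: MPT}, but to replace the use of determinacy for an arbitrary set by an application of $\utilde{\mathbf{\Pi}}^0_1$-$\DET$, which is available over $\ATR$ by Theorem~\ref{theorem: atr implies 1 det}, together with the coding trick furnished by $\HK(T)$. Given a tree $T\subseteq\omega^{<\omega}$ with $[T]$ cofinal, I first pass to the $T$-recursive tree $\HK(T)$. The point of the extra coordinate $h=(i^{\smallfrown}f\oplus g)*\Phi_i^g$ is that $[\HK(T)]$ is a closed set (in fact $\Pi^0_1$ relative to $T$), and on it the third coordinate $h$ determines the first coordinate $(i^{\smallfrown}f\oplus g)$: from $h$ one reads off player I's moves, hence $i$, $f$, $g$, and then $f$ is forced to be the modulus function for $\Phi_i^g$, so the second coordinate is determined too. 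Thus $\HK(T)$ satisfies the hypothesis of Lemma~\ref{lemma: technique to construct perfect} once we know it is perfect.

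The main work is to produce a pointed perfect subtree of $T$ by playing a game. I would set up a $\Pi^0_1(T)$ game $G$ in which player II tries to build a path through $\HK(T)$ — i.e.\ tries to play a pair $(i,f)$ and respond to I's moves so that the resulting $g$ lies in $[T]$ and $\Phi_i^g$ is total, while player I tries to falsify this. Cofinality of $[T]$ is exactly what is needed to show that player I cannot have a winning strategy: given any strategy $\tau$ for I, one can find a real $z$ in $[T]$ (or rather a $g\in[T]$) that is Turing-above $\tau$, pick an index $i$ with $\Phi_i^g = z$-codes-$\tau$, and use this to defeat $\tau$, so II wins that play. Hence by $\utilde{\mathbf{\Pi}}^0_1$-$\DET$ player II has a winning strategy $h$. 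Now I feed all possible plays of I against $h$: this yields a perfect subtree of $\HK(T)$ (perfect because II's strategy, being a genuine function on $\omega^{<\omega}$, reacts to the $\omega$-branching of I's moves, and distinct plays by I give distinct outcomes $h$). Each branch has its $g$-coordinate in $[T]$, and the winning condition guarantees $g \geqT h \geqT \HK(T) \geqT T$, so projecting to the $g$-coordinate via Lemma~\ref{lemma: technique to construct perfect} gives a perfect tree $T_1\subseteq T$ with $[T_1]$ cofinal of points computing $T_1$ — i.e.\ a pointed subtree. The verification that everything can be carried out in $\ATR$ amounts to checking that the game is $\utilde{\mathbf{\Pi}}^0_1$, that the strategy and the tree of plays can be formed by arithmetical transfinite recursion, and that Lemma~\ref{lemma: technique to construct perfect} is provable at this level.

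The step I expect to be the main obstacle is the precise bookkeeping that makes player I's non-winning a consequence of mere cofinality while keeping the payoff set $\utilde{\mathbf{\Pi}}^0_1$. One has to be careful about the order of quantifiers: II must commit to the index $i$ early (so that "$\Phi_i^g$ is total and the $f$-coordinate is its modulus" is a $\Pi^0_1$ condition on the run), yet $i$ has to be chosen large enough to encode a potential opposing strategy of I, which II does not see in advance. The standard resolution — already implicit in the $\HK$ construction — is to let II play $i$ as a first move and then have the winning-strategy analysis quantify over all $i$; defeating a given strategy $\tau$ of I then only requires exhibiting \emph{some} $i$ and some legal II-run, which cofinality of $[T]$ supplies. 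I would also need the observation, recorded just before Theorem~\ref{theorem: hk translation theorem}'s neighborhood in the text, that for paths of $\HK(T)$ the coordinate $h$ (or $g$) computes back $T$, so that "pointed" really follows; and I would invoke Lemma~\ref{lemma: pointed implies upper cone} only implicitly, since here we want the tree itself, not just an upper cone. Modulo this quantifier management, the argument is a routine translation of Martin's proof into the $\utilde{\mathbf{\Pi}}^0_1$-determinacy setting.
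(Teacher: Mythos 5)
Your high-level plan matches the paper's: run a $\Pi^0_1(T)$ game built from $\HK(T)$, invoke Theorem~\ref{theorem: atr implies 1 det} for determinacy, use cofinality of $[T]$ to rule out a win for one side, extract a perfect tree from the winning strategy, and project via Lemma~\ref{lemma: technique to construct perfect}. However there are two concrete problems.

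First, the player assignment. In $G_{[\HK(T)]}$ player~{\bf I} plays the \emph{entire} first $\oplus$-coordinate $(i^{\smallfrown}f\oplus g)$; that is, {\bf I} announces $i$ and then builds $f$ and $g$, while {\bf II} plays the second coordinate $h$. You describe a game in which {\bf II} plays $(i,f)$ and {\bf I} ``plays $g$''. That is not $\HK(T)$, and in that game cofinality does not help {\bf II} at all: if {\bf I} is free to pick $g$, then {\bf I} plays any $g\notin[T]$ and wins outright. The cofinality argument runs in the opposite direction to what you wrote — it shows that \emph{{\bf II}} cannot win (given a putative {\bf II}-strategy $\tau$, take $g\in[T]$ with $g\geqT T\oplus\tau$, choose $i$ with $\Phi_i^g$ coding $\tau$, let $f$ be the modulus, and have {\bf I} play $(i^{\smallfrown}f\oplus g)$). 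Since {\bf I}'s payoff set $[\HK(T)]$ is closed, $\utilde{\mathbf{\Pi}}^0_1$-$\DET$ gives \emph{{\bf I}} a winning strategy $w$.

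Second, and more importantly, the pointedness step is unjustified. You assert that ``the winning condition guarantees $g\geqT h\geqT\HK(T)\geqT T$,'' where $h$ is the winning strategy. What the membership condition for $[\HK(T)]$ actually gives is $g\geqT h'$, where $h'$ is the second \emph{coordinate} of that particular path $(i^{\smallfrown}f\oplus g)\oplus h'$, i.e., the opponent's sequence of moves in that one run — not the strategy. If you simply feed all plays of the opponent against the winning strategy, the resulting $g$'s have no reason to compute the strategy, hence no reason to compute the projected tree; you would only obtain $\wMPT$, not $\MPT$. The paper repairs exactly this: fix $x\in 2^{\omega}$ with $x\equivT w$ and take
$[S]=\{w*(z\oplus x)\mid z\in 2^{\omega}\}$.
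Letting $z$ range over $2^{\omega}$ makes $S$ perfect, while the constraint $h'=z\oplus x$ makes every path's $g$-coordinate satisfy $g\geqT z\oplus x\geqT x\equivT w\geqT S$, so the tree $T_1$ obtained from Lemma~\ref{lemma: technique to construct perfect} (which is $S$-recursive) is pointed. This $z\oplus x$ device is the missing step in your argument.
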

\begin{proof}
Given any cofinal tree $T$, $\HK(T)$ is clearly also cofinal. By Theorem \ref{theorem: atr implies 1 det}, $\HK(T)$ is determined.  It is well known  (see Martin~\cite{Martin68} or Montalb\'an and Shore~\cite{MS15}) that if $\HK(T)$ is cofinal,  then {\bf II} cannot  have a winning strategy. So {\bf I} has a winning strategy, say $w$. Let $x\in 2^{\omega}$ be a real with $x\equivT w$. 

Let $S$ be a $w$-recursive tree so that $[S]=\{w*(z\oplus x)\mid z\in 2^{\omega}\}$. Then
\begin{enumerate}
\item if $(i^{\smallfrown}f_0\oplus g) \oplus (z_0\oplus x)\in [S]$ and $(i^{\smallfrown}f_1\oplus g) \oplus (z_1\oplus x)\in [S]$, then $f_0=f_1$ and $z_0=z_1$. Together with Lemma \ref{lemma: technique to construct perfect} this implies that there is a $w$-recursive perfect tree $T_1\subseteq T$.

\item for every $(i^{\smallfrown}f\oplus g) \oplus (z\oplus x)\in [S]$, we have that $g\in [T]$ and $g\geqT z\oplus x\geqT w \geqT S$. This implies that $T_1$ is pointed.
\end{enumerate} 
Thus $\ATR$ implies $\MPT$.
\end{proof}

However, even over $\ACA$, $\wMPT$ is strictly weaker than $\ATR$.

\begin{theorem}\label{theorem: wmpt does not imply art}
Over $\ACA$, $\wMPT$ does not imply $\ATR$.
\end{theorem}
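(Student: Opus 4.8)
The plan is to build an $\omega$-model $\mathcal{M}=(\omega,M,\dots)$ that witnesses $\mathcal{M}\models\ACA+\wMPT$ but $\mathcal{M}\not\models\ATR$. To defeat $\ATR$ it suffices to arrange that $M$ contains no set satisfying, say, the hyperarithmetic hierarchy, so I would take $M$ to be closed under the Turing jump (to get $\ACA$) but contained in the $\Delta^1_1$ sets (to defeat $\ATR$). The natural candidate is a jump ideal generated by a suitable single real of low hyperarithmetic complexity together with all its arithmetic jumps; for instance, start with an arithmetically generic $g\in 2^\omega$ and set $M=\{y\mid \exists n\,(y\leqT g^{(n)})\}$. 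This is a jump ideal, so $\mathcal{M}\models\ACA$, and since $g$ is arithmetically generic it computes no nonrecursive hyperarithmetic-closure-type object, so a standard argument shows $\mathcal{M}\not\models\ATR$ (alternatively one cites that such a model omits a sufficiently complicated $\Sigma^1_1$-class).

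The heart of the proof is verifying $\mathcal{M}\models\wMPT$: given any tree $T\in M$ with $[T]$ cofinal in $M$, I must find a perfect subtree $S\subseteq T$ with $S\in M$. Here I would reuse the machinery of Section~3. Since $T\in M$, fix $n$ with $T\leqT g^{(n)}$; because $[T]$ is cofinal in $M$ and $g^{(n+1)}\in M$, there is a path $x\in[T]$ with $x\geqT g^{(n+1)}\geT g^{(n)}\geqT T$, and in particular $x\geT T$ while $x$ is still of the form $x\leqT g^{(m)}$ for some $m>n$. Let $j$ be an index with $\Phi_j^x=g^{(n)}$ (or $=\oplus_{i\leq n}$-type reduction as in Theorem~\ref{theorem: over rca mpt not imply wkl}). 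Form $\HK_j(g^{(n)},T)$, a $g^{(n)}$-recursive tree with the two key properties: every $f\oplus g'\in[\HK_j(g^{(n)},T)]$ has $g'\in[T]$ and $g'\geqT g^{(n)}\oplus T\geqT\HK_j(g^{(n)},T)$, and there exists such an $f\oplus g'$ (namely $f\oplus x$ for a suitable $f$) computable from $g^{(m)}$ and properly above $g^{(n)}$. Since $\oplus_{n<i\leq m}g^{(i)}$-type reals are arithmetically generic over $\HK_j(g^{(n)},T)$, relativize Lemma~\ref{lemma: recursive pointed tree} to $g^{(n)}$ and apply it to $\HK_j(g^{(n)},T)$ to get a $g^{(n)}$-recursive perfect subtree $S\subseteq\HK_j(g^{(n)},T)$; then by the ``$f_0=f_1$'' uniqueness property of $\HK_j$ and Lemma~\ref{lemma: technique to construct perfect}, there is an $S$-recursive perfect tree $T_1\subseteq T$. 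Since $T_1\leqT S\leqT g^{(n)}$, we have $T_1\in M$, so $\mathcal{M}\models\wMPT$. (Note that we do \emph{not} claim $T_1$ is pointed, and indeed it need not be, which is consistent with $\wMPT$ being strictly weaker than $\MPT$.)

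The main obstacle I anticipate is the non-$\ATR$ direction: one must be careful that the jump ideal $M$ generated by an arithmetically generic $g$ genuinely fails $\ATR$. The cleanest route is to observe that $M\subseteq\Delta^1_1$ (every element is arithmetic in some $g^{(n)}$, and $g$ itself, being arithmetically generic, can be chosen $\Delta^1_1$), whereas $\ATR$ would force the existence in $M$ of, e.g., a set coding the ramified analytic hierarchy up to some recursive ordinal past what $g^{(\omega)}$ reaches — contradicting that $M$ is closed only under finitely-iterated jumps. A slightly more robust alternative, avoiding delicate generic-real arguments, is to take $M=\{y\mid\exists n\,(y\leqT \emptyset^{(n)})\}$ exactly as in Proposition~\ref{theorem: aca does not imply wmpt}'s model — but that model fails $\wMPT$, so that does not work; hence the genericity (or an analogous randomness-based construction, as the authors hint in the introduction) really is needed to simultaneously kill $\ATR$ while keeping every cofinal tree perfect-subtree-able inside $M$. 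I would therefore expect the write-up to spend most of its effort on the cofinality-plus-genericity bookkeeping in the $\wMPT$ verification, with the $\ACA$ part immediate and the $\neg\ATR$ part a short citation or standard model-theoretic remark.
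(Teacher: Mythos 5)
Your model $M=\{y\mid\exists n\,(y\leqT g^{(n)})\}$ for a single arithmetically generic $g$ does \emph{not} satisfy $\wMPT$, so the proof cannot work as written. Relativize Lemma~\ref{lemma: union singletons} to $g$: there is a $g$-recursive tree $T'$ such that $[T']$ is countable and contains exactly one real of each Turing degree $\mathbf{g}^{(n)}$. This $T'$ lies in $M$ (it is $\leqT g$), is cofinal in $M$ (every $y\in M$ is $\leqT g^{(n)}$ for some $n$), and yet has no perfect subtree since $[T']$ is countable. So $\mathcal{M}\not\models\wMPT$, which is exactly the obstacle you already identified for the unrelativized model built from the $\emptyset^{(n)}$'s; passing to a single generic and its jumps does not escape it. A second, independent problem is your claim that the reals $g^{(i)}$ (for $n<i\leq m$) are arithmetically generic over $\HK_j(g^{(n)},T)$: they are not. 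Since $g^{(i)}$ is arithmetical in $g^{(n)}$ for $i>n$, it belongs to a dense $g^{(n)}$-arithmetical set coding ``differs from $g^{(i)}$'' and hence cannot be $g^{(n)}$-arithmetically generic. So the relativized Lemma~\ref{lemma: recursive pointed tree} does not apply, and the construction of the perfect subtree collapses.

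The paper avoids both problems by using a \emph{sequence} of mutually arithmetically generic reals $g_0,g_1,\ldots$ (each $g_{n+1}$ generic over $\oplus_{i\leq n}g_i$) and taking $M=\{x\mid\exists n\,(x\leqT (\oplus_{i\leq n}g_i)^{(n)})\}$. Mutual genericity guarantees that at each level the model acquires a genuinely fresh generic $g_{n+1}$ which is not arithmetical in $\oplus_{i\leq n}g_i$; this is precisely what lets the relativized Lemma~\ref{lemma: recursive pointed tree} apply to the $\HK$-tree, and it also blocks the Lemma~\ref{lemma: union singletons} counterexample, since no single element of $M$ can generate a cofinal family of $M$ by iterating the jump. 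Your higher-level plan — jump ideal below $\Delta^1_1$ to kill $\ATR$, $\HK$-tree plus genericity plus Lemma~\ref{lemma: technique to construct perfect} to verify $\wMPT$ — matches the paper, but the choice of the ideal is not a cosmetic detail: it is the single point where the argument either works or fails.
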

\begin{proof}

Let $g_0 \in 2^{\omega}$ be arithmetically generic and for every $n$ let $g_{n+1} \in 2^{\omega}$ be  arithmetically generic relative to $g_0\oplus g_1\oplus g_2 \oplus \ldots \oplus g_n$. Let  $\mathcal{M}=(\omega, M,\dots)$, where $M=\{x\mid \exists n (x\leqT (\oplus _{i\leq n}g_i)^{(n)})\}$.  Obviously $\mathcal{M}\models \ACA$, but  $\mathcal{M}\not\models \ATR$ as ${\emptyset}^{(\omega)}$ is not in $M$.

Now let every tree $T\subseteq \omega^{<\omega}$, which is cofinal in $M$, be given.   Then there must be some real $x\in [T]$ and some $n$ so that $T \leqT \emptyset^{(n)}\oplus g_0\oplus g_1\oplus g_2\cdots \oplus g_n\leT x$ and $x$ is not arithmetical in  $ g_0\oplus g_1\oplus g_2\cdots \oplus g_n$.
Fix~$j$ such that $\Phi_j^x=\emptyset^{(n)} \oplus g_0\oplus g_1\oplus g_2\cdots \oplus g_n$. On the other hand, there is some $m>0$ so that $x\leqT \emptyset^{(m+n)}\oplus (\oplus _{i\leq m+n}g_i)$. Note that $\oplus _{n<i\leq m+n}g_i$ is $\emptyset^{(m+n)}\oplus (\oplus_{i\leq n}g_i)$-arithmetically generic. Then $\HK(\oplus _{i\leq n}g_i, T)$ is a  $\emptyset^{(n)}\oplus(\oplus _{i\leq n}g_i)$-recursive tree so that

\begin{enumerate}
\item  for every $f\oplus g\in [\HK_j(\oplus _{i\leq n}g_i,T)]$, $g\in [T]$ and $g\geqT \emptyset^{(n)}\oplus(\oplus _{i\leq n}g_i)\oplus T\geqT \HK_j(\oplus _{i\leq n}g_i, T)$; and 
\item  there is an $f\oplus g\in [\HK_j(\oplus _{i\leq n}g_i,T)]$, for example $f\oplus x$ for some $f$, so that $\emptyset^{(m+n)}\oplus(\oplus _{i\leq m+n}g_i)\geqT  f\oplus g\geT \emptyset^{(n)}\oplus(\oplus _{i\leq n}g_i) \geqT \HK_j(\oplus _{i\leq n}g_i, T)$.
\end{enumerate}

Note that $\oplus _{n<i\leq m+n}g_i$ is $[\HK_j(\emptyset^{(n)}\oplus(\oplus _{i\leq n}g_i),T)]$-arithmetically generic. Relativizing  Lemma \ref{lemma: recursive pointed tree} to $\emptyset^{(n)}\oplus(\oplus _{i\leq n}g_i)$ together with the fact that  $x$ is not arithmetical in  $\oplus _{i\leq n}g_i$, we apply it to $[\HK_j(\emptyset^{(n)}\oplus(\oplus _{i\leq n}g_i),T)]$ and obtain that there exists a  $\emptyset^{(m+n)}\oplus(\oplus _{i\leq n}g_i)$-recursive perfect tree  $S\subseteq \HK_j(\oplus _{i\leq n}g_i,T)$. 

Note that  $f_0\oplus g \in S\subseteq  [\HK(\oplus _{i\leq n}g_i, T)]$ and $f_1\oplus g \in S\subseteq [\HK(\oplus _{i\leq n}g_i, T)]$ implies $f_0=f_1$. Using the fact that $S$ is a perfect tree, by Lemma \ref{lemma: technique to construct perfect}, there is a $\emptyset^{(m+n)}\oplus(\oplus _{i\leq n}g_i)$-recursive perfect tree $T_1$ so that for every $g\in [T_1]$, there is some $f$ so that $f\oplus g\in [S]$. Note that $T_1\subseteq T$.  So $T$ has a perfect subtree in $M$.

Thus $\mathcal{M}\models \wMPT$.
\end{proof}

\begin{corollary}\label{corollary: wept does not imply mpg}
Over $\ACA$, $\wMPT$ does not imply $\MPT$.
\end{corollary}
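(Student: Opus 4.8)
The plan is to derive Corollary~\ref{corollary: wept does not imply mpg} purely by combining results already established in the paper, with no new construction required. The key observation is that Theorem~\ref{theorem: wmpt does not imply art} produces an $\omega$-model $\mathcal{M}$ satisfying $\ACA$ and $\wMPT$ but not $\ATR$, while Proposition~\ref{proposition: mpt implies atr} shows that over $\ACA$ the theory $\MPT$ implies $\ATR$. So $\mathcal{M}$ cannot be a model of $\MPT$, and therefore $\mathcal{M}$ witnesses that $\wMPT$ does not imply $\MPT$ over $\ACA$.

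More precisely, I would argue as follows. Fix the model $\mathcal{M} = (\omega, M, \dots)$ constructed in the proof of Theorem~\ref{theorem: wmpt does not imply art}, where $M = \{x \mid \exists n\,(x \leqT (\oplus_{i\leq n}g_i)^{(n)})\}$ for a sufficiently generic sequence $\langle g_n \rangle_{n\in\omega}$. By that theorem, $\mathcal{M} \models \ACA + \wMPT$, and $\mathcal{M} \not\models \ATR$ since $\emptyset^{(\omega)} \notin M$. Now suppose toward a contradiction that $\wMPT$ implies $\MPT$ over $\ACA$. Then $\mathcal{M} \models \MPT$, and since also $\mathcal{M} \models \ACA$, Proposition~\ref{proposition: mpt implies atr} gives $\mathcal{M} \models \ATR$, contradicting the previous sentence. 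Hence, over $\ACA$, $\wMPT$ does not imply $\MPT$.

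There is essentially no obstacle here: the corollary is an immediate packaging of Theorem~\ref{theorem: wmpt does not imply art} and Proposition~\ref{proposition: mpt implies atr}. The only thing to be slightly careful about is that the implication $\MPT \Rightarrow \ATR$ in Proposition~\ref{proposition: mpt implies atr} is stated over $\ACA$ (not over $\RCA$), which is exactly why $\mathcal{M}$ needs to satisfy $\ACA$ — and it does, by design of the construction in Theorem~\ref{theorem: wmpt does not imply art}. A one-line remark to this effect suffices; no calculation is needed. The statement over $\WKL$ noted just after Theorem~\ref{theorem: wmpt not aca} follows by the same reasoning applied instead to the model of that theorem together with the earlier fact (from the discussion in the introduction) that, over $\RCA$, $\MPT$ is equivalent to $\ATR$ over $\ACA$ — but for Corollary~\ref{corollary: wept does not imply mpg} the argument above over $\ACA$ is all that is required, and it a fortiori gives the statement over $\RCA$ as well.
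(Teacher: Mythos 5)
Your proof is correct and matches the paper's intended argument exactly: the corollary is obtained by combining Theorem~\ref{theorem: wmpt does not imply art} (which gives a model of $\ACA + \wMPT$ failing $\ATR$) with Proposition~\ref{proposition: mpt implies atr} (which says that over $\ACA$, $\MPT$ implies $\ATR$), yielding a model of $\ACA + \wMPT$ that cannot satisfy $\MPT$. Your closing observation that the same counterexample model also establishes the non-implication over any weaker base theory such as $\RCA$ or $\WKL$ is likewise how the paper uses this corollary.
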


We remark that by a method similar to the proof of Theorem \ref{theorem: wmpt does not imply art} it can be shown that, even over $ \utilde{\mathbf{\Delta}}^1_1$-$\mathrm{CA}_0$, $\wMPT$ does not imply $\ATR$.

\smallskip

Figure~\ref{fig:diagram} gives an overview of the obtained results.

\begin{figure}[tb]
\begin{tikzpicture}[scale=.6,auto=left,every node/.style={fill=black!15},y=-2cm]
% Top node
  \node (ATR) at (0,0) {$\ATR \equiv \ACA + \MPT$};

  % Left column
  \node (ACA) at (-6,2) {$\ACA$};
  \node (WKL) at (-6,6) {$\WKL$};

% Center node
  \node (WKLwMPT) at (0,4) {$\WKL + \wMPT$};

% Right column

  \node (MPT) at (6,2) {$\MPT$};
  \node (wMPT) at (6,6) {$\wMPT$};

% Bottom node
  \node (RCA) at (0,8) {$\RCA$};

  % Straight lines
  \foreach \from/\to in {ATR/ACA,ACA/WKL,WKL/RCA,ATR/MPT,MPT/wMPT,wMPT/RCA,ATR/WKLwMPT,WKLwMPT/WKL,WKLwMPT/wMPT}
  \draw [->,thick] (\from) -- (\to);
\end{tikzpicture}
\caption{All arrows represent strict implications.}
\label{fig:diagram}
\end{figure}
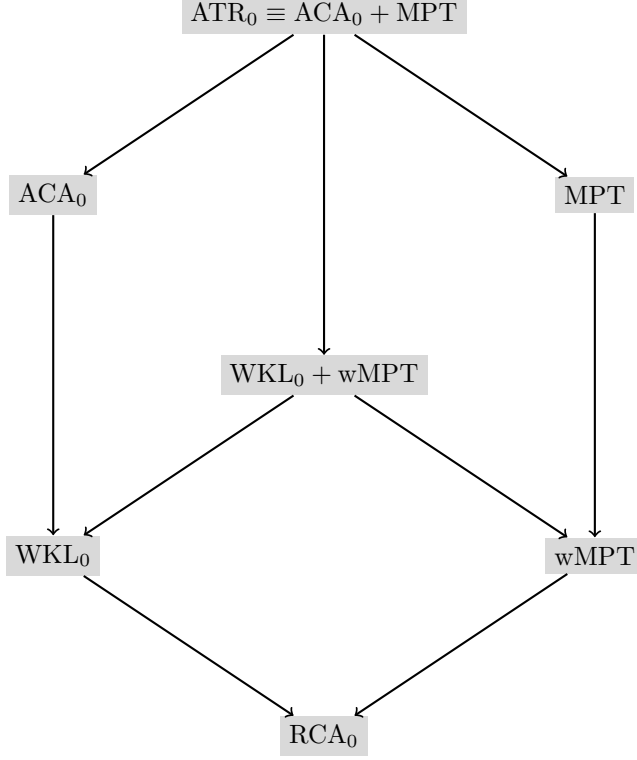

\bibliographystyle{plain}
\bibliography{references}

\begin{thebibliography}{10}

\bibitem{Blass72}
Andreas Blass.
\newblock Complexity of winning strategies.
\newblock {\em Discrete Math.}, 3:295--300, 1972.

\bibitem{CY06}
C.~T. Chong and Liang Yu.
\newblock Maximal chains in the {T}uring degrees.
\newblock {\em J. Symbolic Logic}, 72(4):1219--1227, 2007.

\bibitem{DH10}
Rodney~G. Downey and Denis~R. Hirschfeldt.
\newblock {\em Algorithmic randomness and complexity}.
\newblock Theory and Applications of Computability. Springer, New York, 2010.

\bibitem{Harrington78}
Leo Harrington.
\newblock Analytic determinacy and {$0\sp{\sharp }$}.
\newblock {\em J. Symbolic Logic}, 43(4):685--693, 1978.

\bibitem{HK75}
Leo~A. Harrington and Alexander~S. Kechris.
\newblock A basis result for {$\Sigma \sp{0}\sb{3}$} sets of reals with an
  application to minimal covers.
\newblock {\em Proc. Amer. Math. Soc.}, 53(2):445--448, 1975.

\bibitem{JockSo72}
Carl~G. Jockusch, Jr. and Robert~I. Soare.
\newblock {$\Pi \sp{0}\sb{1}$} classes and degrees of theories.
\newblock {\em Trans. Amer. Math. Soc.}, 173:33--56, 1972.

\bibitem{KW10}
Peter Koellner and W.Hugh Woodin.
\newblock Large cardinals from determinacy.
\newblock In Matthew Foreman and Akihiro Kanamori, editors, {\em Handbook of
  Set Theory}, pages 1951--2119. Springer Netherlands, 2010.

\bibitem{Ler83}
Manuel Lerman.
\newblock {\em Degrees of unsolvability}.
\newblock Perspectives in Mathematical Logic. Springer-Verlag, Berlin, 1983.
\newblock Local and global theory.

\bibitem{Martin68}
Donald~A. Martin.
\newblock The axiom of determinateness and reduction principles in the
  analytical hierarchy.
\newblock {\em Bull. Amer. Math. Soc.}, 74:687--689, 1968.

\bibitem{MarSte89}
Donald~A. Martin and John~R. Steel.
\newblock A proof of projective determinacy.
\newblock {\em J. Amer. Math. Soc.}, 2(1):71--125, 1989.

\bibitem{MS15}
Antonio Montalb{\'a}n and Richard Shore.
\newblock The strength of turing determinacy within second order arithmetic.
\newblock to appear.

\bibitem{Mos09}
Yiannis~N. Moschovakis.
\newblock {\em Descriptive set theory}, volume 155 of {\em Mathematical Surveys
  and Monographs}.
\newblock American Mathematical Society, Providence, RI, second edition, 2009.

\bibitem{Niesbook09}
Andr{\'e} Nies.
\newblock {\em Computability and randomness}, volume~51 of {\em Oxford Logic
  Guides}.
\newblock Oxford University Press, Oxford, 2009.

\bibitem{Odi}
Piergiorgio Odifreddi.
\newblock {\em Classical recursion theory}, volume 125 of {\em Studies in Logic
  and the Foundations of Mathematics}.
\newblock North-Holland Publishing Co., Amsterdam, 1989.
\newblock The theory of functions and sets of natural numbers, With a foreword
  by G. E. Sacks.

\bibitem{Sacks90}
Gerald~E. Sacks.
\newblock {\em Higher recursion theory}.
\newblock Perspectives in Mathematical Logic. Springer-Verlag, Berlin, 1990.

\bibitem{Simp09}
Stephen~G. Simpson.
\newblock {\em Subsystems of second order arithmetic}.
\newblock Perspectives in Logic. Cambridge University Press, Cambridge;
  Association for Symbolic Logic, Poughkeepsie, NY, second edition, 2009.

\bibitem{SS88}
Theodore~A. Slaman and John~R. Steel.
\newblock Definable functions on degrees.
\newblock In {\em Cabal {S}eminar 81--85}, volume 1333 of {\em Lecture Notes in
  Math.}, pages 37--55. Springer, Berlin, 1988.

\bibitem{So70}
Robert~M. Solovay.
\newblock A model of set-theory in which every set of reals is {L}ebesgue
  measurable.
\newblock {\em Ann. of Math. (2)}, 92:1--56, 1970.

\bibitem{Steel76}
John Steel.
\newblock Determinateness and subsystems of analysis.
\newblock {\em Ph.D. Thesis, University of California, Berkeley}, 1976.

\end{thebibliography}

\end{document}